\newtheorem{lem}{Lemma}[section]
\newtheorem{theo}{Theorem}
\newcommand{\C}{{\bf C}}
\newcommand{\D}{{\bf D}}
\newcommand{\R}{{\bf R}}
\newcommand{\Z}{{\bf Z}}
\newcommand{\pr}{\mathrm{Pr}}
\newenvironment{block}[1]{\trivlist\item[\hskip \labelsep{{#1}.}]}{\endtrivlist}
\newtheorem{com}{Comment}
\newtheorem{exa}{Example}
\DeclareMathAlphabet{\mathpzc}{OT1}{pzc}{m}{it} 
\newcommand{\cT}{\bf{T}}
\newcommand{\cR}{\bf{R}}
\newcommand{\cZ}{\bf{Z}}
\newcommand{\cS}{\bf{S}}
\begin{document}
\title{Rays to renormalizations}
\date{\today}

\author{Genadi Levin}

\address{Institute of Mathematics, The Hebrew University of Jerusalem, Givat Ram,
Jerusalem, 91904, Israel}

\normalsize
\maketitle

\begin{abstract}
Let $K_P$ be the filled Julia set of a polynomial $P$ and $K_f$ be the filled Julia set of a renormalization $f$ of $P$.
We show, loosely speaking, that there is a finite-to-one function $\lambda$ from the set of $P$-external rays having limit points in $K_f$ onto the set of $f$-external rays to $K_f$ such that
$R$ and $\lambda(R)$ share the same limit set.
In particular, if a point of the Julia set $J_f=\partial K_f$ of a renormalization is accessible from $\C\setminus K_f$ then it is accessible through an external ray of $P$ (the inverse is obvious). Another interesting corollary is that: a component of $K_P\setminus K_f$ can meet $K_f$ only at a single (pre-)periodic point.
We study also a correspondence
induced by $\lambda$ on arguments of rays.
These results are generalizations to all polynomials (covering notably the case of connected Julia set $K_P$) of some results of \cite{LP}, \cite{abcdef} (Sect 6) and \cite{pz} where the case is considered when $K_P$ is disconnected and $K_f$ is a periodic component of $K_P$.
\end{abstract}

\section{Introduction}
\subsection{Polynomial external rays.}
Let $Q:\C\to\C$ be a non-linear polynomial considered as a dynamical system. Conjugating $Q$ if necessary by a linear transformation, one can assume without loss of generality that $Q$ is monic centered, i.e.,
$Q(z)=z^{\deg(Q)}+ a z^{\deg(Q)-2}+\cdots$.
We recall briefly necessary definitions, see e.g. \cite{dhorsay}, \cite{CG}, \cite{Mil0}, \cite{LS90-91} for details. The filled Julia set $K_Q$ of $Q$ is the complement $\C\setminus A_Q$ to the basin at infinity $A_Q=\{z: Q^n(z)\to\infty, n\to\infty\}$, and $J_Q=\partial A_Q=\partial K_Q$ is the Julia set (here and below $Q^n(z)$ is the image of $z$ by the $n$-iterate $Q^n$ of $Q$ for $n$ non-negative and the full preimage of $z$ by $Q^{|n|}$ for $n$ negative). Let $u_Q: A_Q\to\R_+$ be Green's function in $A_Q$ such that $u_Q(z)\sim \log|z|+o(1)$ as $z\to\infty$.
For all $z$ in some neighborhood $W$ of $\infty$,
$u_Q(z)=\log|B_Q(z)|$ where $B_Q$ is the B\"{o}ttcher coordinate of $Q$ at $\infty$, i.e., a univalent function from $W$
onto $\{w: |w|>R\}$, for some $R>1$, such that $B_Q(Q(z))=B_Q(z)^{\deg Q}$ for $z\in W$ and $B_Q(z)/z\to 1$ as $z\to\infty$.
An equipotential of $Q$ of level $b>0$ is the level set $\{z: u_Q(z)=b\}$.
Alternatively, the equipotential containing a point $z\in A_Q$ is the closure of the union $\cup_{n>0}Q^{-n}(Q^n(z))$ and $u_Q(z)=\lim_{n\to\infty}(\deg(Q))^{-n}\log|Q^n(z)|$ is the level of this equipotential where $b=u_Q(z)$ is called the $Q$-level of $z\in A_Q$. Note that $u_Q(Q(z))=(\deg Q)  u_Q(z)$ for all $z\in A_Q$.
The gradient flow for Green's function (potentials) $u_Q$ equipped with direction from $\infty$ to $J_Q$ defines $Q$-external rays. More specifically, the gradient flow has singularities precisely at the critical points of $u_Q$ which are preimages by $Q^n$, $n=0,1,\cdots $ of critical points of $Q$ that lie in the basin of infinity $A_Q$.
If a trajectory $R$ of the flow that starts at $\infty$ does not meet a critical point of $u_Q$, it extends as a smooth (analytic) curve, external ray $R$, up to $J_Q$.
If $R$ does meet a critical point of $u_Q$, one should consider instead two corresponding (non-smooth) left and right external rays as left and right limits to $R$ of smooth external rays
(for a visualization of such rays, see e.g., Figures 1(a-b) of \cite{LP} or images in \cite{pz}-\cite{pz1}; to get an impression about geometry of the Julia set of renormalizable polynomials, see e.g. computer images of \cite{Pict}).
Each external ray $R$ is parameterized by the level of equipotential $b\in (+\infty,0)$.
The argument $\tau\in\cT:=\cR/\cZ$ of an external ray $R$
is an argument of the curve $R$ asymptotically at $\infty$. Informally, $\tau$ is the argument at which $R$ crosses the "circle at infinity".
The correspondence between external rays and their arguments is one-to-one on smooth rays and two-to-one on non-smooth ones. If $R$ is a Q-external ray of argument $\tau$ then
$Q(R)$ is also a ray of argument $\sigma_{\deg(Q)}(\tau)$ where $\sigma_k(t)=tk (mod 1)$.
Note that, for any $b$ large enough, $B_Q$ maps equiponential of level $b$ onto the round circle $\{|w|=e^b\}$
and arcs of external rays from this equipotential to $\infty$ onto standard rays that are orthogonal to this circle. Finally, $K_Q$ is connected if and only if $B_Q$ extends as a univalent function to the basin of infinity $A_Q$ and if and only if all external rays of $Q$ are smooth.

Let ${\cS}=\{|z|=1\}$ be the unit circle which we identify - when this is not confusing - with $\cT$ via the exponential $t\in{\cT}\mapsto\exp(2\pi i t)\in\cS$.
\subsection{Polynomial-like maps and renormalization.}
Let us recall, \cite{dh}, that
a triple $(W, W_1, f)$ is a polynomial-like map,
if $W,W_1$ are topological disks, $\overline{W_1}\subset W$ and $f: W_1\to W$ is a proper holomorphic map
of some degree $m\ge 2$.
The set of non-escaping points $K_f=\cap_{n=1}^\infty f^{-n}(W)$ is called the filled Julia set of $(W, W_1, f)$.
By the Straightening Theorem \cite{dh}, there exists a
monic centered polynomial $G$ of degree $m$ which is hybrid equivalent to $f$, i.e.,
there is a quasiconformal
homeomorphism $h:\C\to\C$
which is conformal a.e. on $K_f$, such that
$G\circ h=h\circ f$ near $K_f$. The map $h$ is called \emph{straightening}.
This implies in particular that $K_f$ is the set of limit points of the union $\cup_{n\ge 0}f^{-n}(z)$, for any $z\in W$ with, perhaps, at most one exception.

We say that another polynomial-like map $(\tilde W, \tilde W_1, \tilde f)$
of the {\it same degree} $m$ is {\it equivalent} to $(W, W_1, f)$ if there is a component $E$ of $W\cap\tilde W$ such that $K_f\subset E$ and $f=\tilde f$ in a neighborhood of $K_f$.
Taking a point $z$ as above close to $J_f=\partial K_f$, it follows (cf. \cite[Theorem 5.11]{mcm}) that $K_f=K_{\tilde f}$ and that this is indeed an equivalence relation on polynomial-like maps. Denote by ${\bf f}$ the equivalence class of the polynomial-like map $(W, W_1, f)$,
$K_{\bf f}$, $J_{\bf f}$ the corresponding filled Julia set and Julia set of (any representative of) ${\bf f}$,
and by $f$ a restriction to a neighborhood of $K_{\bf f}$ of the map of a ${\bf f}$-representative (i.e., for any two representatives $(W^{(i)}, W^{(i)}_1, f_i)$, $i=1,2$, we have $f_1=f_2=f$ in a neighborhood of $K_{\bf f}$).

From now on, let us fix a monic centered polynomial $P:\C\to\C$ of degree $d>1$.

We say that ${\bf f}$ is a {\it renormalization} of $P$ (cf. \cite{mcm}, \cite{Inou}) if ${\bf f}$ is an equivalence class of polynomial-like maps
such that $K_{\bf f}$ is a connected proper subset of $K_P$
and, for some $r\ge 1$, $f=P^r$ in a neighborhood of $K_{\bf f}$.
\subsection{Assumptions.} Suppose that

\

(p1) {\it $\bf f$ is a renormalization of $P$
}.

\

To avoid a situation when an external ray of $P$ can have a limit point in $J_{\bf f}$ as well as a limit point off $J_{\bf f}$, we introduce another condition:

\

(p2) {\it there exists a representative $(W^*, W^*_1, f)$ of the renormalization ${\bf f}$ of $P$ and some $b_*>0$ as follows. If $z\in\partial W^*_1$ belongs to an external ray of $P$ which has a limit point in $K_{\bf f}$ then the $P$-level of $z$ is at least $b_*$, i.e., $u_P(z)\ge b_*$.}

\

Let us stress that external rays of $P$ as in (p2) can cross boundaries of $W^*$, $W^*_1$ many times (or e.g. have joint arcs with the boundaries).

This condition holds if $W^*$ is obtained by the following frequently used construction that we only indicate here, see \cite{Mil1}, \cite{mcm}, \cite{Inou} for details.
In the first step, a simply-connected domain $W_0$ is built using an appropriate Yoccoz puzzle so that $\partial W_0=L_{hor}\cup L_{vert}\cup F$ where $L_{hor}$ is the union of a finitely many arcs of a fixed equipotential of $P$, $L_{vert}$ is the union
of a finitely many arcs of external rays of $P$ between ends of arcs of $L_{hor}$
and a finite set $F$ of some repelling periodic points of $J_P$ or/and their preimages such that
$K_{\bf f}\subset W_0\cup F$ and $f: f^{-1}(W_0)\to W_0$ is an branched covering.
By the construction,
every external ray of $P$ to $J_{\bf f}\setminus F$ must cross  the "horizontal" part $L_{hor}$ so that (p2) is obviously satisfied for the set of those rays.
If either $L_{vert}=F=\emptyset$ (as in Example \ref{example} that follows) or $F\cap K_{\bf f}=\emptyset$, one can take $W^*=W_0$ so that (p2) holds for $W^*_1=f^{-1}(W^*)$.
If $F\subset J_{\bf f}$, then $W_0\setminus f^{-1}(W_0)$ is a degenerate annulus.
Then, in the second step, $W^*$ is modified from $W_0$ by "thickening" \cite[p.12]{Mil1} around points of the set $F$ which adds only finitely many rays (tending to $F$). Then (p2) holds for $W^*_1=f^{-1}(W^*)$ as well.



\begin{exa}\label{example} Assume that the Julia set of the polynomial $P$ is disconnected and $K$ is a component of $K_P$ different from a point. In this case $K=K_{\bf f}$ for some renormalization ${\bf f}$ of $P$ and conditions (p1)-(p2) are fulfilled.
The boundary of $W^*$ (hence, $W^*_1$, too) can be chosen to be merely a component of an equipotential that encloses $K$.
With such a choice, each intersection point of an external ray of $P$ with $\partial W^*$ has a fixed  level so every external ray can cross boundaries $W^*$ and $W^*_1$  at most once.
\end{exa}




Our goal is to study a correspondence between external rays of $P$ that have limit points in $J_{\bf f}$, on the one hand, and external, or polynomial-like rays of the renormalization ${\bf f}$, on the other
(up to change of straightening, see below). In the case of disconnected Julia set $J_P$ and the renormalization ${\bf f}$ as in Example \ref{example}
this has been done in \cite{LP}, \cite{abcdef} (Sect. 6), and \cite{pz}.
\subsection{Polynomial-like rays.}
For a curve $\alpha:[0,1)\to\bar{\C}$,
the limit (or principal, or accumulation) set of $\alpha$ is the set $\pr(\alpha)=\overline\alpha\setminus\alpha$.

Let us define external rays of the renormalization ${\bf f}$.
By \cite{dh}, since $K_{\bf f}$ is connected, the monic centered polynomial $G$ of degree $m$ which is hybrid equivalent to any representative of ${\bf f}$ is unique defined by ${\bf f}$.
Let $h$ be a {\it straightening} of ${\bf f}$. By this we mean a quasiconformal homeomorphism $\C\to\C$
which is conformal
a.e. on $K_{\bf f}$ and $G\circ h=h\circ f$ holds on some
neighborhood of $K_{\bf f}$.
One can also assume that $h$ is conformal at $\infty$ such that $h'(\infty)\neq 0$.
As the filled Julia set $K_G$ of the polynomial $G$ is connected, given $t\in\cT$ there is one and only one external ray of $G$
of argument $t$, denoted by $R_{t, G}$.
The $h^{-1}$-image
$l_t^h:=h^{-1}(R_{t,G})$ of the ray is called the {\it polynomial-like ray to
$K_{\bf f}$ of argument} $t$.
As $h:\C\to\C$ is a homeomorphism,
$\pr(l_t^h)=h^{-1}(\pr(R_{t,G}))$.
Note that the straightening $h$ is not unique. However, the polynomial $G$ is unique,
and if $\tilde h$ is another straightening,
although $\tilde h$ defines another systems of polynomial-like rays,
the homeomorphism $\tilde h^{-1}\circ h: \C\to\C$ maps $l_t^h$ onto $l_t^{\tilde h}$ and $\pr(l_t^h)$ onto $\pr(l_t^{\tilde h})$).

In what follows we fix a straightening map
$h:\C\to\C$ (see Theorem \ref{t2}(e) and its proof though).
Then the set of polynomial-like rays $\{l_t\}$ is fixed, too (where we omit $h$ in $l_t^h$ as $h$ is fixed).
For brevity, $P$-external rays are called $P$-{\it rays}, or just {\it rays}, and polynomial-like rays to $K_{\bf f}$ are $f$-{\it rays, or polynomial-like rays}.
\subsection{Main results.}
Given a connected compact set $K\subset\C$ which is different from a point, we say
that a curve $\gamma:[0,1)\to\Omega:=\C\setminus K$
{\it converges
to a prime end} $\hat P$ of $K$ if, for a conformal homeomorphism $\psi:\C\setminus K\to \{|z|>1\}$, the curve $\psi\circ\gamma:[0,1)\to \{|z|>1\}$ converges to a single
point $P\in\cS$; we say that $\gamma$ converges to the prime end $\hat P$ {\it non-tangentially} if, moreover, $\psi\circ\gamma$ converges to the point $P$ non-tangentially, i.e., the set
$\psi\circ\gamma((1-\epsilon,1))$ lies inside a sector (Stolz angle)
$\{z: |\arg(z-P)-\arg P|\le\alpha\}$, for some $\epsilon>0$, $\alpha\in(0,\pi/2)$. Furthermore, we say
that
two curves $\gamma_1,\gamma_2:[0,1)\to\Omega$ are {\it K-equivalent if they both converge to the same prime end and, moreover, have the same limit sets} $\pr(\gamma_1)=\pr(\gamma_2)$ in $\partial K$\footnote{One can show that, provided $\gamma_1$ converges to a single point $a\in\partial K$, $\gamma_2$ is K-equivalent to $\gamma_1$ if and only if $\gamma_1$, $\gamma_2$ are homotopic through a family of curves in $\Omega$ converging to $a$.}.
By Lindel\"{o}f's theorem, e.g., \cite{Pom}, Theorem 2.16, if two curves converge to the same prime end non-tangentially, they share the same limit set. Therefore,
if $\gamma_1,\gamma_2$ converge to the same prime end of $K$ non-tangentially, then $\gamma_1,\gamma_2$ are also K-equivalent.

The following statement was proved in \cite{abcdef}\footnote{In \cite{abcdef}, a different terminology is used.} in the set up of Example \ref{example}.
\begin{theo}\label{T1} (cf. \cite{abcdef}, Theorem 6.9)  Assume (p1)-(p2) hold.
For each $P$-ray $R$ that has an accumulation point in $K_{\bf f}$ we have that $\pr(R)\subset J_{\bf f}$ and
there is one and only one polynomial-like ray $l=\lambda(R)$ such that two curves $l$, $R$ are $K_{\bf f}$-equivalent. Moreover,
$l$, $R$ converge to a single prime end of $K_{\bf f}$ non-tangentially.
Furthermore, $\lambda: R\mapsto l$ maps the set of P-rays to $K_{\bf f}$
\textbf{onto} the set of polynomial-like rays, and is ``almost injective'':
$\lambda$ is one-to-one except when one and only one of the following (i)-(ii)
holds. Suppose that $\lambda^{-1}(\ell)=\{R_1,\dots,R_k\}$ with $k>1$.

\begin{enumerate}

\item[(i)] $k=2$ and both rays $R_1, R_2$ are non-smooth and share a common arc starting at a critical point of Green's function $u_P$
to $J_{\bf f}$, or


\item[(ii)] there is $z\in J_{\bf f}$ such that $\pr(R_i)=\{z\},
    i=1,\dots, k$, at least two of the rays $R_1, \dots, R_k$ are disjoint,
    and, for some $n\ge 0$, $P^{rn}(z)\in Y$ where $Y\subset J_{\bf f}$ is a finite collection
of repelling or parabolic periodic points of $P$ that depends merely on $K_{\bf f}$.

\end{enumerate}
If $K_P$ is connected then (i) is not possible.
\end{theo}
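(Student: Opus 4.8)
The plan is to push the whole configuration into a quasiconformal chart around $K_{\bf f}$ in which the renormalization is linearized, and then run a combinatorial/dynamical propagation argument. Since $K_{\bf f}$, hence $K_G$, is connected, $B_G$ extends to a conformal isomorphism $A_G\to\{|w|>1\}$ and $\Phi:=B_G\circ h:\C\setminus K_{\bf f}\to\{|w|>1\}$ is a quasiconformal homeomorphism which, near $K_{\bf f}$, conjugates $f=P^r$ to $w\mapsto w^m$, carries each polynomial-like ray $l_t$ to the radial ray $\{re^{2\pi i t}:r>1\}$ and each $f$-equipotential $\{u_G\circ h=c\}$ to the round circle $\{|w|=e^c\}$. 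As $\Phi$ is quasiconformal, $\psi\circ\Phi^{-1}$ is a quasiconformal self-homeomorphism of $\{|w|>1\}$; hence $\Phi$ identifies the prime ends of $K_{\bf f}$ with $\cS$ via a quasisymmetric relabelling, and (using that quasiconformal self-maps of a disc preserve non-tangential approach) a curve converges to a prime end of $K_{\bf f}$ (non-tangentially) exactly when its $\Phi$-image converges to a point of $\cS$ (non-tangentially); in particular $l_t$ converges non-tangentially to the prime end $\hat P_t:=\Phi^{-1}(e^{2\pi i t})$, being radial in this chart. For localization I would use (p2) together with the monotone decrease of $u_P$ along any $P$-ray: once $R$ has dropped below $P$-level $b_*$ its remaining arc can no longer meet $\partial W^*_1$, so if $R$ accumulates in $K_{\bf f}\subset W^*_1$ it has a tail $R'\subset\overline{W^*_1}\setminus K_{\bf f}$, and (passing to a shorter tail) $R'$ lies in a neighbourhood of $K_{\bf f}$ on which $f=P^r$; using $f(R')\subset P^r(R)$ and $f^{-1}(K_{\bf f})\cap W^*_1=K_{\bf f}$ one then checks $\pr(R)=\pr(R')\subset K_{\bf f}$, and the sharper $\pr(R)\subset J_{\bf f}$ falls out afterwards from $\pr(R)=\pr(\lambda(R))=h^{-1}(\pr(R_{t_0,G}))$.

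\emph{Existence and uniqueness of $\lambda(R)$ --- the main point.} The heart of the matter is to show that $\Phi(R')$ converges to a single point $e^{2\pi i t_0}\in\cS$, non-tangentially; then I set $\lambda(R):=l_{t_0}$, and the $K_{\bf f}$-equivalence of $R$ and $l_{t_0}$, their convergence to the single prime end $\hat P_{t_0}$ non-tangentially, and the uniqueness of $l_{t_0}$ (distinct polynomial-like rays converge to distinct prime ends) follow via Lindel\"{o}f's theorem. When $R$ is pre-periodic (its argument pre-periodic under $\sigma_{d^r}$) this convergence is immediate from the Douady--Hubbard landing theorem. In general one must rule out that $R'$ winds around $K_{\bf f}$: I would cross the nested round annuli $\Phi(\{v_0m^{-n-1}<u_G\circ h<v_0m^{-n}\})$, whose moduli are summable because $K_{\bf f}$ is a non-degenerate continuum while the $(u_G\circ h)$-levels decay geometrically, and combine the monotonicity of $u_P$ along $R'$ with a telescoping Gr\"{o}tzsch-type estimate bounding the angular oscillation of $\Phi(R')$ across each such annulus by a summable quantity dominated by its radial progress. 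This is the step I expect to be the main obstacle: the estimate must survive critical points of $u_P$ in $A_P$ --- which is precisely why the newly covered connected case, where $B_P$ is univalent on $A_P$ and all $P$-rays are smooth, is here the easier one --- and must cope with the fact that, unlike in Example~\ref{example}, $R$ may cross $\partial W^*$ and $\partial W^*_1$ many times.

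\emph{Surjectivity.} From $\Phi\circ f=\Phi^m$ and $f(l_t)=l_{\sigma_m(t)}$ one gets that $\lambda$ conjugates $R\mapsto P^r(R)$ to $l_t\mapsto l_{\sigma_m(t)}$. Let $T\subset\cT$ be the set of arguments of polynomial-like rays in the image of $\lambda$. It is non-empty: the $h^{-1}$-image of a repelling periodic point of $G$ is a repelling or parabolic periodic point of $P$ lying in $J_{\bf f}$, and a $P$-ray lands there. It satisfies $\sigma_m^{-1}(T)\subset T$: lifting a tail of a $P$-ray $R$ with $\lambda(R)=l_{\sigma_m(s)}$ by the appropriate one of the $m$ local inverse branches of $f:W^*_1\to W^*$ --- which map $P$-rays to $P$-rays and $K_{\bf f}$ to itself --- produces a $P$-ray $R''$ accumulating in $K_{\bf f}$ with $\lambda(R'')=l_s$. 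And $T$ is closed: this needs a normal-family argument on $P$-rays $R_n$ with $\lambda(R_n)\to l_t$, extracting a limiting $P$-ray still accumulating in $K_{\bf f}$ and tracking its limiting argument and prime end through possible collisions with critical points of $u_P$. Since backward $\sigma_m$-orbits are dense in $\cT$, a non-empty closed set with $\sigma_m^{-1}(T)\subset T$ is all of $\cT$; hence $\lambda$ is onto.

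\emph{Almost injectivity.} Let $\lambda^{-1}(l_{t_0})=\{R_1,\dots,R_k\}$ with $k>1$; the $R_i$ are pairwise $K_{\bf f}$-equivalent, so converge to the common prime end $\hat P_{t_0}$ with common limit set $\pr(l_{t_0})$. If some two of them meet, then --- since two distinct trajectories of the gradient flow of $u_P$ can meet only along a common sub-arc issuing from a critical point of $u_P$ (necessarily an iterated $P$-preimage of a critical point of $P$ in $A_P$), at which exactly two one-sided rays fuse --- we must have $k=2$ with $R_1,R_2$ non-smooth sharing such an arc: this is case (i), and it is impossible when $K_P$ is connected, because then $B_P$ is univalent on $A_P$, $u_P$ has no critical point there, and all $P$-rays are smooth and pairwise disjoint. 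Otherwise the $R_i$ are pairwise disjoint; being disjoint curves converging to a common prime end with a common limit set, a channel/Lindel\"{o}f argument forces $\pr(l_{t_0})=\pr(R_i)$ to be a single point $z\in J_{\bf f}$, so $z$ is biaccessible from $\C\setminus K_P$; by the classical argument $z$ is pre-periodic, and since $K_{\bf f}$ is $f$-invariant with $f=P^r$, for large $n$ the point $P^{rn}(z)$ is a repelling or parabolic periodic point of $P$ in $J_{\bf f}$ at which strictly more $P$-rays land than polynomial-like rays. The collection $Y$ of such periodic points is finite and depends only on $K_{\bf f}$, being read off from the ray-landing combinatorics of the straightened polynomial $G$; verifying this finiteness (and the value of $k$) is the last piece of bookkeeping. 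Since (i) and (ii) exclude one another --- the rays are non-disjoint in (i), at least two are disjoint in (ii) --- this completes the proof.
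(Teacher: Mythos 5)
Your overall architecture is the paper's: uniformize $\C\setminus K_{\bf f}$, transport both systems of rays to a punctured disk, match landing points on $\cS$, get surjectivity from expansion of the angle map and almost-injectivity from a combinatorial argument about disjoint rays landing together; your surjectivity scheme (non-empty, backward $\sigma_m$-invariant, closed, hence all of $\cT$) and your case analysis (i)/(ii) are sound in outline. But the step you yourself flag as ``the main obstacle'' --- that the chart image of a $P$-ray lands at a single point of $\cS$, non-tangentially --- is precisely the content of the paper's main technical Lemma \ref{l1}, and the mechanism you propose for it does not work. Summability of the moduli of the round annuli $\{e^{v_0m^{-n-1}}<|w|<e^{v_0m^{-n}}\}$ puts no constraint on the angular oscillation of a curve crossing them: a curve can wind arbitrarily many times inside an annulus of tiny modulus, and a Gr\"otzsch-type inequality compares moduli of nested separating annuli --- it does not bound the turning of a crossing curve. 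Moreover, monotonicity of $u_P$ along $R$ gives monotone crossing of $P$-equipotentials, not of the $(u_G\circ h)$-level sets, so your curve may enter and leave each round annulus infinitely often and the ``telescoping'' has no well-defined terms. What actually closes this step is dynamical expansion: the induced return map $g=\psi\circ f\circ\psi^{-1}$ extends to an \emph{expanding} holomorphic map in an annulus neighborhood of $\cS$ (Przytycki \cite{P}); the arcs of $R$ between consecutive level sets $f^{-k}(\partial W^*_1)$ have uniformly bounded length (here one must handle the multiple crossings permitted by (p2), which the paper does by cutting at the minimal-$u_P$-level point of $R\cap f^{-k}(\partial W^*_1)$); and the contracting inverse branches of $g$ then force geometric decay of the lengths of successive arcs, hence finite total length and landing. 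Non-tangentiality --- which you need for the Lindel\"of step identifying $\pr(R)$ with $\pr(\lambda(R))$, and which must be \emph{uniform} in $R$ to feed the compactness used for surjectivity --- is not addressed at all in your sketch; it requires a further Koebe-distortion argument propagating the ratio of (distance to $\cS$) to (length of the remaining arc) under the expanding dynamics (Lemma \ref{l1}, 3$^o$).

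A secondary structural problem is your choice of chart. You run everything in $\Phi=B_G\circ h$, which is only quasiconformal; quasiconformal maps do not preserve rectifiability, so even after you establish a length bound for arcs of $P$-rays in the plane you cannot transfer it to their $\Phi$-images. The paper avoids this by splitting the chart as in (\ref{pict}): all length and expansion estimates are carried out in the \emph{conformal} coordinate $\psi:\C\setminus K_{\bf f}\to\D^*$, where $\sup|\psi'|$ on compacta controls lengths, and the quasiconformal straightening enters only in the separate Lemma \ref{l2}, where the images of the radial rays are controlled by the quasi-line (bounded turning) property of quasiconformal images of straight lines, giving their landing with a uniform Stolz angle. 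Until the landing/non-tangentiality statement is proved by some such expansion argument, the existence of $\lambda(R)$, the $K_{\bf f}$-equivalence, and the closedness step in your surjectivity argument are all unsupported.
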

Note that in case (ii) any two disjoint $P$-rays completed by the joint limit point $z$ split the plane into two domains such that one of them
contains $K_{\bf f}\setminus\{z\}$ and another one - points from $K_P\setminus K_{\bf f}$. In particular, if $K_P$ is connected,
the second domain must contain a component of $K_P\setminus K_{\bf f}$ that goes all the way to a pre-periodic point $z\in J_{\bf f}$.
In fact, this is "if and only if": see the following Theorem \ref{t1} (b).

As an illustration, see e.g. pictures on p.116 of \cite{mcm} (explained in Example IV, p. 115) of a "dragon" filled Julia set of a quadratic polynomial $P$ admitting three
renormalizations; corresponding to theses renormalizations maps $\lambda$ are one-to-one except at a countably many polynomial-like rays where $\lambda$ is 6-to-1 at the top picture,
2-to-1 at the left bottom and 3-to-1 at the right bottom. In all three cases, landing points of rays where $\lambda$ is not one-to-one are (pre-)periodic to a fixed point of $P$
where six $P$-rays land.


Next two theorems are consequences of the proof of Theorem \ref{T1}.
\begin{theo}\label{t1} Assume (p1)-(p2).

(a) If a point $a\in J_{\bf f}$ is accessible along
a curve $s$ in $\C\setminus K_{\bf f}$, then $a$ is the landing point
of a $P$-ray $R$, moreover, curves $s$, $R$ are $K_{\bf f}$-equivalent.


(b) There exists a finite set $Y\subset J_{\bf f}$ of repelling
or parabolic periodic points of $f$, as follows. Let $S$ be a component of $K_P\setminus K_{\bf f}$ such that
$(\overline{S}\setminus S)\cap J_{\bf f}\neq\emptyset$. Then $\overline{S}\setminus S$ is a single point $b\in J_{\bf f}$ and, moreover,
$f^n(b)\in Y$ for some $n\ge 0$.
\end{theo}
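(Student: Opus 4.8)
For part (a), the plan is to transfer $s$ to the straightened picture, extract an external ray of $G$, pull it back, and then invoke Theorem \ref{T1}. Since $s(t)\to a$ we have $\pr(s)=\{a\}$, and because $\C\setminus K_{\bf f}$ is simply connected, classical prime-end theory (e.g.\ \cite{Pom}) gives that $s$ converges to a single prime end $\hat p$ of $K_{\bf f}$. Applying the straightening homeomorphism $h$, the curve $h\circ s$ converges to the prime end $h(\hat p)$ of $K_{G}$ and $h\circ s(t)\to h(a)\in J_{G}$. As $K_{G}$ is connected, the B\"{o}ttcher coordinate $B_{G}$ is a conformal map $\C\setminus K_{G}\to\{|w|>1\}$ and identifies prime ends of $K_{G}$ with points of $\cS$; let $t_{0}$ be the argument of $h(\hat p)$, so $B_{G}\circ h\circ s(t)\to e^{2\pi i t_{0}}$. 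Applying Lindel\"{o}f's theorem (cf.\ \cite{Pom}, Theorem 2.16) to $B_{G}^{-1}$, which is bounded near $\cS$, along the curve $B_{G}\circ h\circ s$, one gets that $B_{G}^{-1}$ has angular limit $h(a)$ at $e^{2\pi i t_{0}}$; in particular its radial limit there is $h(a)$, i.e.\ the external ray $R_{t_{0},G}$ of $G$ lands at $h(a)$. Hence $R_{t_{0},G}$ and $h\circ s$ both converge to the prime end $h(\hat p)$ and have limit set $\{h(a)\}$, so they are $K_{G}$-equivalent; pulling back by the homeomorphism $h^{-1}$, which preserves prime-end convergence and limit sets, the polynomial-like ray $l_{t_{0}}=h^{-1}(R_{t_{0},G})$ is $K_{\bf f}$-equivalent to $s$ and $\pr(l_{t_{0}})=\{a\}$. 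Finally, by the surjectivity part of Theorem \ref{T1} there is a $P$-ray $R$ with $\lambda(R)=l_{t_{0}}$; by the definition of $\lambda$ together with Theorem \ref{T1}, $R$ is $K_{\bf f}$-equivalent to $l_{t_{0}}$ (hence converges to $\hat p$) and $\pr(R)=\pr(l_{t_{0}})=\{a\}$, so $R$ lands at $a$, and by transitivity of $K_{\bf f}$-equivalence $R$ and $s$ are $K_{\bf f}$-equivalent. I expect no real obstacle in (a) beyond carefully handling the prime-end dictionary under the homeomorphism $h$.

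For part (b), I would first settle the topology: since $K_{P}$ is closed and $\C\setminus K_{\bf f}$ is connected, $S$ is relatively closed in $\C\setminus K_{\bf f}$, so $\overline{S}\setminus S\subset K_{\bf f}$; and it avoids $\mathrm{int}\,K_{\bf f}$ because points of $S\subset K_{P}\setminus K_{\bf f}$ cannot accumulate there, whence $\overline{S}\setminus S\subset J_{\bf f}$. Assume it is nonempty and fix $b$ in it. The crucial claim --- which is the real content --- is that $S$ is trapped in a sector: there exist two disjoint $P$-rays $R_{1}\neq R_{2}$ with $\pr(R_{1})=\pr(R_{2})=\{b\}$ such that $S$ lies in the component $V$ of $\hat\C\setminus(R_{1}\cup R_{2}\cup\{b,\infty\})$ disjoint from $K_{\bf f}$. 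Granting this, $\overline{S}\subset\overline{V}$ and $\overline{V}\cap K_{\bf f}=\{b\}$ (because $R_{1},R_{2}\subset\C\setminus K_{P}$, $\infty\notin K_{\bf f}$, $V\cap K_{\bf f}=\emptyset$), so $\overline{S}\setminus S=\{b\}$, which is the first assertion of (b). Moreover $R_{1},R_{2}$ are then two disjoint $P$-rays with common limit set $\{b\}\subset K_{\bf f}$; by Theorem \ref{T1} each $R_{i}$ is $K_{\bf f}$-equivalent to a polynomial-like ray landing at $b$, and the analysis in the proof of Theorem \ref{T1} (its case (ii); case (i) is excluded, as those rays share an arc and so are not disjoint) shows that such a $b$ must satisfy $f^{n}(b)=P^{rn}(b)\in Y$ for some $n\ge 0$, where $Y$ is the finite set of repelling or parabolic periodic points of $f$ provided there. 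That is the second assertion of (b).

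It remains to establish the trapping claim, and this is where the work is and the main obstacle lies. The mechanism is that every $P$-ray accumulating in $K_{\bf f}$ lies in $\C\setminus K_{P}$, hence is disjoint from $S\subset K_{P}$; since these rays accumulate onto all of $J_{\bf f}$ (by Theorem \ref{T1} and the fact that the polynomial-like rays do), the connected set $S$ is confined to one complementary ``gap'' of the set $X$ formed by $K_{\bf f}$ together with the closures of all $P$-rays accumulating in $K_{\bf f}$. One must then check that $X$ is closed, that $S$ cannot leak between two gaps, and that the boundary of the gap containing $S$ meets $K_{\bf f}$ in the single point $b$ and is otherwise made up of two $P$-rays landing at $b$; in particular $b$ is then accessible from $\C\setminus K_{\bf f}$, so part (a) recovers these rays $R_{1},R_{2}$. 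This gap analysis is precisely the bookkeeping already carried out in the proof of Theorem \ref{T1} when establishing surjectivity of $\lambda$ and the exceptional fibres (i)--(ii), and the remark following Theorem \ref{T1} records the reverse implication (a case (ii) configuration at $z$ always encloses a component of $K_{P}\setminus K_{\bf f}$ reaching $z$). Thus Theorem \ref{t1} is really a byproduct of the proof of Theorem \ref{T1}; the genuinely new ingredient is the elementary topology above, and the main difficulty is to organize that gap analysis so that it delivers exactly the ``single point, eventually landing in the finite set $Y$'' conclusion.
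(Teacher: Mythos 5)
Part (a) of your proposal is correct. You route the argument through the straightened plane of $G$ (Lindel\"of applied to $B_G^{-1}$, then surjectivity of $\lambda$), whereas the paper argues directly in the uniformizing disk of $K_{\bf f}$: by Lemma \ref{l1}, parts 2$^o$ and 3$^o$, some $K$-related ray lands at $z_0=\lim\psi\circ s$ non-tangentially, and Lindel\"of (Pommerenke, Cor.\ 2.17) applied to $\psi^{-1}$ forces the corresponding $P$-ray to land at $a$. The two routes rest on the same facts (your step that $h^{-1}$ preserves prime-end convergence is exactly the boundary extension of the quasiconformal map $\Psi$ used in Lemma \ref{l2}), so this is a repackaging rather than a different proof, and it is fine.

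Part (b) has a genuine gap. Everything rests on your ``trapping claim'' --- that $\overline S$ meets $K_{\bf f}$ at a single point $b$ and that $S$ is enclosed by two disjoint $P$-rays with limit set $\{b\}$ --- and you do not prove it: you list what would have to be checked (that the complementary component of $X=K_{\bf f}\cup\bigcup\overline R$ containing $S$ is bounded by two rays meeting $K_{\bf f}$ at one point) and then assert that this ``gap analysis'' is already carried out in the proof of Theorem \ref{T1}. It is not: that proof never decomposes $\C\setminus X$ into gaps, and in the dynamical plane such a decomposition is delicate because the limit sets $\pr(R)$ may be nondegenerate continua and $J_{\bf f}$ need not be locally connected, so there is no a priori reason the closure of a component of $\C\setminus X$ meets $K_{\bf f}$ in only one point --- which is precisely the first assertion of (b), i.e.\ you have restated the problem rather than solved it. The paper's actual mechanism is different and is the real content: push $S$ into the disk, set $I=\overline{\psi(S)}\setminus\psi(S)\subset\cS$, a closed connected arc; if $I$ were nondegenerate, a $g$-periodic point $x$ interior to $I$ would carry a $K$-related ray that must cross $\psi(S)$, which is impossible since the corresponding $P$-ray lies in $\C\setminus K_P$ while $S\subset K_P$; hence $I=\{z_0\}$. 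The two disjoint rays are then produced by compactness of the family of $K$-related arcs (Claim 2 in the proof of Lemma \ref{l1}) applied to rays landing at points approaching $z_0$ from either side, after which Lemma \ref{Theorem 1'}(ii) gives the conclusion about $Y$. Note also that even granting your trapping claim, invoking case (ii) requires the two bounding rays to land at the same point of $\cS$ (the same prime end), not merely at the same point $b\in J_{\bf f}$; this is automatic in the paper's construction but is another unaddressed step in yours.
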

Note that part (a)
is in fact an easy corollary of Lemma \ref{l1}, Sect \ref{s2}, similar to \cite{LP}.
Part (b) is void if (and only if) $K_{\bf f}$ is itself a component of $K_P$.

For the next statement, we introduce the following notations. Let $\Lambda\subset \cT$ be the set of
arguments of all $P$-rays that have their limit points in $J_{\bf f}$.
Observe that by Theorem \ref{T1} the whole limit sets of such rays are in $J_{\bf f}$ and, given $\tau\in\Lambda$, there is one and only one $P$-ray denoted then by $R_{\tau,P}$ which has its limit set in $J_{\bf f}$.
Indeed, this is obvious if the $P$-ray of argument $\tau$ is smooth. On the other hand, if there are two $P$-rays, left and right, of argument $\tau$, only one of them can have its limit point in $J_{\bf f}$
because the other one must go to another component of $K_P$.
Now, the map $\lambda$ of Theorem~\ref{T1} induces a map $p: \Lambda\to \cT$, such that for all $\tau\in\Lambda$,
$$\lambda(R_{\tau,P})=l_{p(\tau)}.$$
By Theorem \ref{T1}, $\pr(l_{p(\tau)})=\pr(R_{\tau,P})$ and, moreover, $R_{\tau,P}$, $l_{p(\tau)}$ are $K_{\bf f}$-equivalent.

Given a positive integer $k$, let $\sigma_k:\cT\to \cT$, $\sigma_k(t)=k t (mod 1)$. Recall that $\deg(f)=m$. Let $D:=\deg(P^r)=d^r$.
\begin{theo}\label{t2} (cf. \cite{pz})
\begin{enumerate}
\item[(a)] $\Lambda$ is a compact nowhere dense subset of $\cT$ which is invariant under $\sigma_{D}$,
\item[(b)] $\sigma_m\circ p=p\circ\sigma_{D}$ on $\Lambda$,
\item[(c)] the map $p: \Lambda\to \cT$ is surjective and finite-to-one, moreover, "almost injective" as defined in Theorem \ref{T1},
\item[(d)] $p: \Lambda\to \cT$ extends to a continuous monotone degree one map $\tilde p: \cT\to \cT$.
\item[(e)] the map $p$ is unique in the following sense: if $\tilde p:\Lambda\to\cT$ corresponds to another straightening $\tilde h$,
then $\tilde p(t)=p(t)+k/(m-1) (mod 1)$, for some $k=0,1,\cdots m-1$.
\end{enumerate}
\end{theo}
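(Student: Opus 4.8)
The plan is to derive each of (a)--(e) from Theorem \ref{T1}, Theorem \ref{t1}, and standard facts about external rays. Throughout I will use that $\lambda$ (hence $p$) preserves limit sets and the property of converging non-tangentially to a prime end. First I would establish (b), the semiconjugacy. Recall $f=P^r$ near $K_{\bf f}$ and $G\circ h=h\circ f$, so $h$ conjugates $f$ to $G$ and $G$ has degree $m$. Given $\tau\in\Lambda$, the ray $R_{\tau,P}$ has limit set in $J_{\bf f}$; applying $P^r$ multiplies the argument by $D=d^r$, and since $\pr(R_{\tau,P})\subset J_{\bf f}$ and $P^r(J_{\bf f})=J_{\bf f}$, the image ray $R_{\sigma_D(\tau),P}$ also has limit set in $J_{\bf f}$, i.e.\ $\sigma_D(\tau)\in\Lambda$; this already gives forward invariance of $\Lambda$ in (a). On the renormalization side, $f$ maps the polynomial-like ray $l_t=h^{-1}(R_{t,G})$ to $h^{-1}(R_{\sigma_m(t),G})=l_{\sigma_m(t)}$ (near $K_{\bf f}$, where $f$ and its polynomial-like structure live), because $G$ multiplies arguments by $m$. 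Since $\lambda$ is characterized by the $K_{\bf f}$-equivalence class and $f=P^r$ intertwines the two dynamics on a neighborhood of $K_{\bf f}$, applying $f$ to the pair $(R_{\tau,P}, l_{p(\tau)})$ and using uniqueness in Theorem \ref{T1} gives $\lambda(R_{\sigma_D(\tau),P})=l_{\sigma_m(p(\tau))}$, i.e.\ $p(\sigma_D(\tau))=\sigma_m(p(\tau))$, which is (b); this also shows $\sigma_D(\Lambda)\subseteq\Lambda$ is compatible with the full statement. For full invariance ($\sigma_D^{-1}(\Lambda)\cap\Lambda$ behaving correctly) I would argue that a $P$-ray whose $D$-th iterate lands in $J_{\bf f}$ must itself land in $J_{\bf f}$, since $P^{-r}(J_{\bf f})\cap K_P$ decomposes into $J_{\bf f}$ and pieces of $K_P\setminus K_{\bf f}$, and a ray accumulating on the latter cannot iterate into $J_{\bf f}$.

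For (a), the compactness of $\Lambda$: the set of arguments $\tau$ for which $R_{\tau,P}$ accumulates in the closed set $K_{\bf f}$ is closed, because if $\tau_n\to\tau$ and $R_{\tau_n,P}$ meets $K_{\bf f}$, a Hausdorff-limit / normal-families argument on the rays (which depend continuously on the argument away from the finitely many critical values of $u_P$, and are controlled at those) forces $\pr(R_{\tau,P})\cap K_{\bf f}\neq\emptyset$, hence $=\pr(R_{\tau,P})\subset J_{\bf f}$ by Theorem \ref{T1}; the finitely many non-smooth (critical) arguments are handled by hand using condition (p2). Nowhere density follows because the complement contains the arguments of rays landing in $K_P\setminus K_{\bf f}$, and there are rays landing on $J_P\setminus J_{\bf f}$ (e.g.\ rays landing at $\beta$-type fixed points of $P$ outside $K_{\bf f}$) whose arguments are dense in no interval missed — more precisely, $\Lambda$ cannot contain an interval $I$, for otherwise the union of the rays $R_{\tau,P}$, $\tau\in I$, together with their landing points, would be a closed connected subset of $J_{\bf f}$ with nonempty interior in... — the cleaner route is: $\tilde p$ (from part (d)) is monotone of degree one, so if $\Lambda\supset I$ then $p$ would be injective on a subinterval and $p(I)$ an interval, forcing the polynomial-like rays over $p(I)$ to fill an open set of directions all landing on $J_{\bf f}\subsetneq J_P$; combined with $\sigma_D$-invariance and the fact that $K_{\bf f}\subsetneq K_P$, this contradicts that $J_{\bf f}$ carries positive ``harmonic measure'' from only a nowhere-dense set of $P$-directions. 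I would present this last point carefully, as it is the least mechanical.

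For (c), surjectivity and the ``almost injective'' property are immediate translations of the corresponding assertions of Theorem \ref{T1} via the bijection $R\leftrightarrow\tau$ on $P$-rays to $K_{\bf f}$ and $l\leftrightarrow t$ on polynomial-like rays. For (d): the cyclic order of the rays $R_{\tau,P}$ around $K_{\bf f}$ agrees with the cyclic order of their arguments $\tau$ on $\cT$ (rays are disjoint curves going to $\infty$), and likewise for the $l_t$; since $\lambda$ preserves each limit set and each ray $R_{\tau,P}$ converges non-tangentially to the same prime end of $K_{\bf f}$ as $l_{p(\tau)}$, the map $p$ is (weakly) order-preserving from $\Lambda$ into $\cT$; it is then standard that an order-preserving map from a closed subset of the circle onto a set whose ``gaps'' correspond exactly to complementary arcs extends by affine interpolation on each complementary arc of $\Lambda$ to a continuous monotone circle map, and the degree is one because $p$ intertwines $\sigma_D$ and $\sigma_m$ and is surjective (counting: a full turn in $\tau$ produces a full turn in $p(\tau)$). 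The one subtlety — which complementary arcs of $\Lambda$ are collapsed by $\tilde p$ versus mapped homeomorphically — is exactly dictated by cases (i)--(ii) of Theorem \ref{T1}, so no new work is needed.

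Finally (e), the uniqueness statement, is where I expect the main obstacle, since it is the one genuinely new computation. Two straightenings $h,\tilde h$ of ${\bf f}$ both conjugate $f$ to the same polynomial $G$; hence $\phi:=\tilde h\circ h^{-1}$ conjugates $G$ to itself on a neighborhood of $K_G$ and is conformal at $\infty$ with $\phi'(\infty)\neq 0$. In the Böttcher coordinate $B_G$ of $G$ at $\infty$, the relation $B_G(G(z))=B_G(z)^m$ shows that any such $\phi$ acts near $\infty$, in Böttcher coordinates, as multiplication by an $(m-1)$st root of unity: indeed $\psi:=B_G\circ\phi\circ B_G^{-1}$ satisfies $\psi(w^m)=\psi(w)^m$ and is tangent to the identity at $\infty$, which forces $\psi(w)=\zeta w$ with $\zeta^{m}=\zeta$, i.e.\ $\zeta^{m-1}=1$, so $\zeta=e^{2\pi i k/(m-1)}$ for some $k\in\{0,\dots,m-2\}$. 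Therefore $\phi$ sends the $G$-ray $R_{t,G}$ to $R_{t+k/(m-1),G}$, and consequently $\tilde h^{-1}(R_{t,G})=h^{-1}\circ\phi^{-1}(R_{t,G})=h^{-1}(R_{t-k/(m-1),G})$; re-indexing, the polynomial-like ray system attached to $\tilde h$ is that of $h$ with arguments shifted by $k/(m-1)$. Since $\lambda$ itself is defined purely in terms of $K_{\bf f}$-equivalence classes of $P$-rays — independent of the choice of straightening — the induced maps satisfy $\tilde p(t)=p(t)+k/(m-1)\ (\mathrm{mod}\ 1)$, which is (e). The point requiring care is the normalization ``$\phi'(\infty)\neq 0$, conformal at $\infty$'': one must check that the freedom in choosing $h$ is exactly the affine-at-infinity normalization already imposed, so that $\psi$ is tangent to the identity (not merely linear) at $\infty$; this is where I would spend most of the writing.
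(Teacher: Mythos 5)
The substantive problem is in your treatment of (e); parts (a)--(d) are mostly sound but have one muddled spot. In (e), your map $\psi=B_G\circ\tilde h\circ h^{-1}\circ B_G^{-1}$ is only \emph{quasiconformal} on the annulus $\{1<|w|<1+\epsilon\}$ where the relation $\psi(w^m)=\psi(w)^m$ actually holds: the straightenings $h,\tilde h$ are conformal a.e.\ on $K_{\bf f}$ but not on its complement, and the hybrid conjugacy $G\circ h=h\circ f$ holds only near the filled Julia sets, so the functional equation does not propagate to the neighborhood of $\infty$ where $\psi$ is conformal. Your two hypotheses (``commutes with $w\mapsto w^m$'' and ``tangent to the identity at $\infty$'') therefore live on disjoint regions, and in any case a quasiconformal self-map of an annulus neighborhood of $\cS$ commuting with $w\mapsto w^m$ is far from linear (one can prescribe it arbitrarily on a fundamental annulus and spread it by the dynamics). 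So $\psi(w)=\zeta w$ does not follow, $\phi=\tilde h\circ h^{-1}$ need not send $G$-external rays to $G$-external rays, and the step ``$\phi(R_{t,G})=R_{t+k/(m-1),G}$'' fails. What does survive, and what the paper uses, is the \emph{boundary} behaviour: $\psi$ extends to a homeomorphism $\nu$ of $\cS$ commuting with the expanding map $z\mapsto z^m$, and such a circle homeomorphism is rigid ($\nu(z)=vz$ with $v^{m-1}=1$, since after normalizing $\nu(1)=1$ its lift fixes every $m$-adic rational). The rotation is then transported to $p$ not through the ray foliation but through prime ends, via the straightening-independent landing map $p_K:\Lambda\to\cS$ and the identity $p_K=\Psi^*|_{\cS}\circ p$. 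Your argument breaks exactly where interior rigidity of $\psi$ is invoked; the repair is to work on $\cS$.

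For (a)--(d): compactness of $\Lambda$ and continuity of the landing correspondence are already the content of Claim 2 / part $2^o$ of Lemma \ref{l1}, and you should lean on that rather than re-run a normal-families argument --- the delicate point, that a limit of rays accumulating on $K_{\bf f}$ again has its whole limit set in $J_{\bf f}$, is precisely what condition (p2) and the sets $L_k$ control. Your (c) and (d) are essentially the paper's arguments (the affine interpolation on a gap of $\Lambda$ is the constant extension, since surjectivity plus order preservation force the two endpoints of a gap to have the same image). The one place you visibly do not close the argument is nowhere density: your first attempt is abandoned mid-sentence and the second (``$J_{\bf f}$ carries positive harmonic measure from only a nowhere-dense set of directions'') assumes the conclusion. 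The intended argument is one line from ingredients you already listed: $\Lambda$ is closed, forward-invariant under $\sigma_D$, and proper in $\cT$ (a ray landing at a repelling periodic point of $J_P\setminus J_{\bf f}$ has argument outside $\Lambda$); if $\Lambda$ contained an interval, its $\sigma_D$-iterates would cover $\cT$, forcing $\Lambda=\cT$.
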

In the set up of Example \ref{example}, i.e., $K_P$ is disconnected and $K_f$ is a periodic component of $K_P$, Theorem \ref{t2} was proved in \cite{pz}
(by a different method), with part (c) replaced by an explicit bound for the cardinality of fibers of the map $p$ as well as
with an extra statement about the Hausdorff dimension of the set $\Lambda$.

\

Detailed proof of the main Theorem \ref{T1} is contained in Sect \ref{s2} and proofs of Theorems \ref{t1}-\ref{t2} are in Sect. \ref{s3}.
The proof of Theorem \ref{T1} follows rather closely proofs of Lemma 2.1 of \cite{LP} and Theorems 6.8-6.9 of \cite{abcdef}. An essential difference
is that we have to adapt the proofs to the situation that external rays of $P$ can cross the boundary of $W^*_1$ as in (p2) many times.
%

{\bf Acknowledgments.}
In \cite{L2012}, we answered, under an extra assumption, a question of Alexander Blokh to the author whether an accessible point of the filled Julia set $K_f$ of a renormalization
$f$ of $P$ by some curve outside of $K_f$ is always accessible by an external ray of $P$ (i.e., by a curve outside of the filled Julia set $K_P$ of $P$).
Theorem \ref{t1} (a) straightens this answer, under a weaker assumption (p2).
Theorem \ref{t2} was added following a recent work \cite{pz} which also served as an inspiration for writing this paper up.
Finally, we would like to thank Feliks Przytycki for a helpful discussion 
and the referee for comments that helped to improve the exposition.
\section{Proof of Theorem~\ref{T1}}\label{s2}
Let $f:W^*_1\to W^*$ be a representative of ${\bf f}$ as in (p2).
As $K_f$ is connected, all the critical points of the map
$f: W^*_1\to W^*$ are contained in $K_f$. Hence, for each $k$, $f^k:f^{-k}(W^*_1\setminus K_f)\to W^*_1\setminus K_f$
is an unbranched (degree $m^k$) map. Therefore, $L_k:=f^{-k}(\partial W^*_1)$ is the boundary of a simply-connected domain $f^{-k}(W^*_1)$.
Let $\mathcal{R}$ denote a set of all $P$-rays $R$ such that $R$ has a limit point in $J_f$.
First, we show that all limit points of $R\in\mathcal{R}$ are in $J_f$ introducing along the way some notations.
Let $b_{*,k}=\inf\{u_P(z)| z\in R\cap L_k, R\in\mathcal{R}\}$.
By (p2), $b_{*,0}>0$. As $R\in\mathcal{R}$ implies $P^r(R)\in\mathcal{R}$, then
$b_{*,k}\ge b_{*,0}/D^k$, hence, $b_{*,k}>0$, for all $k$.
Let $R\in\mathcal{R}$ and $k\ge 0$. Since $R\cap L_k$ is a closed set and $b_{*,k}>0$,
there exists a unique point $z_k(R)\in R\cap L_k$ so that $u_P(z_k(R))=\inf\{u_P(z)| z\in R\cap L_k\}$.
Observe that the arc $\Gamma_{k,R}$ of $R$ from $z_k(R)$ down to $J_P$ belongs entirely to $\overline{f^{-k}(W^*_1)}$.
As $\cap_{k\ge 0}\overline{f^{-k}(W^*_1)}=K_f$, we get immediately that the limit set of $R$, which is $\cap_{k\ge 0}\overline{\Gamma_{k,R}}$, is a subset of $J_f$.

Before proceeding with more notations and the main lemma, let us note that $b_{*,k}=b_{*,0}/D^k$, $k=1,2,\cdots$.
Indeed,
as $f^k:f^{-k}(W_1\setminus K_f)\to W_1\setminus K_f$
is an unbranched covering,
each component of the set $f^{-k}(R)$ is an arc of some ray from $\mathcal{R}$.
This implies that $b_{*,k}\le b_{*,0}/D^k$. The opposite inequality was seen before.

Now, choose a conformal isomorphism
$\psi$ from ${\bf C}\setminus K_f$ onto
${\bf D}^*=\{|z|>1\}$, such that $\psi(z)/z\to e$ as $z\to \infty$,
for some $e>0$.
A curve $\tilde{R}$ in ${\bf D}^*$ with a limit set in ${\cS}=\{|z|=1\}$
is called a $K$-related ray, if its preimage $\psi^{-1}(\tilde{R})$
is a $P$-ray $R\in\mathcal{R}$, i.e., $R$ has its limit set in $K_f$.
The argument of $\tilde{R}$ is said to be the argument of the ray $\psi^{-1}(\tilde{R})$.
Let $A_K=\psi(W^*\setminus K_f)$ be an "annulus" with boundary curves $\psi(\partial W^*)$ and $\cS$.
Denote $\tilde z_k(\tilde R)=\psi(z_k(R)$. Note that $\tilde z_k(\tilde R)\in\psi(L_k)\cap\tilde R$ and the arc of the R-related ray $\tilde R$
from $\tilde z_k(\tilde R)$ to $\cS$ is contained in an "annulus" between $\psi(L_k)$ and $\cS$. An arc of a $K$-related
ray $\tilde R=\psi(R)$ from the point $\tilde z_0(\tilde R)=\psi(z_0(R))\in \psi(L_0)$ to $\cS$ is called a $K$-related arc.
Its argument is the argument of the corresponding ray.
The following main lemma and its proof are minor adaptations of the ones of~\cite{LP}, Lemma 2.1.
\begin{lem}\label{l1}

1$^o$ Every $K$-related arc has a finite length and, hence, converges to
a unique point of $\cS$.

2$^o$ For every closed arc $I\subset \cS$
(particularly, a point), the set $K(I)$ of arguments
of all $K$-related arcs converging to a point
of $I$ is a non-empty compact set.

3$^o$ The set of all $K$-related arcs in $\{z:1<|z|<1+\epsilon \}$
converging to a point $z_0$ lies in a Stolz angle
$$\{z:|\arg (z-z_0)-\arg z_0|\le \alpha \},$$
where $\alpha \in (0,\pi/2)$ and $\epsilon$ do not depend on $z_0\in \cS$.
\end{lem}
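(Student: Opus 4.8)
The plan is to mimic the argument of \cite{LP}, Lemma~2.1, adapting it to the present generality where $P$-rays may cross $L_0=\partial W^*_1$ repeatedly. The key geometric input is a standard length--area (extremal length) estimate for curves in the "annulus'' $A_K$ and its $f$-preimages. First I would set up notation for the conformal model: work in $\mathbf{D}^*=\{|z|>1\}$ via $\psi$, and push forward Green's function of $P$ and the equipotentials of $P$ (restricted to $W^*\setminus K_f$) to $A_K$. Since $f=P^r$ near $K_{\mathbf f}$ and $f^k:f^{-k}(W^*_1\setminus K_f)\to W^*_1\setminus K_f$ is an unbranched covering of degree $m^k$, the domains $\psi(f^{-k}(W^*_1))$ form a nested sequence of simply connected domains shrinking to $\mathbf{D}^*\cup\cS$'s complement, i.e. their intersection is $\cS$ together with $K_f$'s image. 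The crucial quantitative fact, established before the lemma in the excerpt, is $b_{*,k}=b_{*,0}/D^k$: the minimal $P$-potential at which a $K$-related ray meets $L_k$ decays geometrically, so the "collars'' between $\psi(L_k)$ and $\psi(L_{k+1})$ are traversed by $K$-related arcs in a controlled way.

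For part $1^\circ$, I would estimate the length of a $K$-related arc $\tilde R$ from $\tilde z_0(\tilde R)$ down to $\cS$ by splitting it at the successive points $\tilde z_k(\tilde R)\in\psi(L_k)$ and bounding the length of each piece $\tilde\Gamma_k$ lying between $\psi(L_k)$ and $\psi(L_{k+1})$. Because $f^k$ conjugates (via $\psi$ and the Böttcher-type coordinate / straightening) the piece at level $k$ to a piece at level $0$ of a bounded family, and because $f^k$ is expanding on the relevant annular region with a definite factor comparable to the dynamical degree, one gets $\mathrm{length}(\tilde\Gamma_k)\le C\theta^k$ for some $\theta\in(0,1)$ uniform in $\tilde R$; summing the geometric series gives a uniform finite length bound. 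A curve of finite length in $\mathbf{D}^*$ must converge to a single point of $\cS$, giving $1^\circ$. I expect the main obstacle to be exactly here: making the "each piece is a quasiconformal copy of a bounded-geometry piece'' argument precise when $\tilde R$ can re-enter and re-exit the collars $\psi(L_k)$ many times, so that the decomposition at the points $\tilde z_k$ is genuinely a decomposition into successive sub-arcs with the stated length decay. One must use that the arc of $\tilde R$ from $\tilde z_k$ onward stays in the annulus between $\psi(L_k)$ and $\cS$ (which follows from the choice of $z_k$ as the minimal-potential crossing), so the pieces do nest correctly.

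Part $3^\circ$ is then a refinement of the same estimate: the Stolz-angle claim follows because the collar between $\cS$ and $\psi(L_{k_0})$ for $k_0$ large is conformally a thin round annulus up to bounded distortion (the straightening $h$ is quasiconformal and conformal a.e.\ on $K_{\mathbf f}$, and $\psi$ is conformal), and a curve of uniformly bounded length entering such a collar and reaching $\cS$ must do so within a cone of aperture $\alpha<\pi/2$ independent of the landing point; I would first prove it in the $G$-model where equipotentials are literally round circles and rays are literally radial, then transport by the quasiconformal map $\psi^{-1}\circ(\text{straightening})^{-1}$, which distorts angles boundedly. Part $2^\circ$ is softer: non-emptiness of $K(I)$ for a point $I=\{z_0\}$ comes from a normal-families / accumulation argument — take $K$-related arcs whose landing points approach $z_0$ and extract a limit arc, using $1^\circ$ to control them — and closedness follows from the uniform length bound in $1^\circ$ together with the uniform Stolz-angle control in $3^\circ$, which prevents arguments from escaping in the limit. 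I would organize the write-up as: (i) the geometric-series length estimate (proving $1^\circ$ and, with the cone refinement, $3^\circ$); (ii) a short compactness argument for $2^\circ$.
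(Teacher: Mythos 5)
Your treatment of $1^o$ follows essentially the paper's route: decompose the arc at the points $\tilde z_k(\tilde R)$, note that $g^k$ carries the piece between $\psi(L_k)$ and $\psi(L_{k+1})$ onto a piece of another $K$-related ray at a fixed level, and use expansion of $g=\psi\circ f\circ\psi^{-1}$ near $\cS$ (Przytycki's theorem that $g$ extends to an expanding holomorphic map in a full annulus around $\cS$, after passing to an iterate) to get geometric decay of lengths and a convergent series. That part is sound.

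The genuine gap is in $3^o$. You propose to prove the Stolz-angle statement ``in the $G$-model where rays are literally radial, then transport by the quasiconformal map.'' But the $K$-related arcs are images under $\psi$ of \emph{$P$-external rays}; they have no a priori relation to the straightening $h$ or to the external rays of $G$. Under $B_G\circ h\circ\psi^{-1}$ a $K$-related arc goes to some curve in the $G$-uniformization plane that is \emph{not} a radial segment --- establishing a correspondence between these two families is precisely the content of the theorem, so it cannot be assumed. (The quasiconformal-image-of-a-radius argument is exactly what the paper uses for the \emph{other} family, the curves $L_u=\Psi(r_u)$, in Lemma~\ref{l2}.) Your fallback --- ``a curve of uniformly bounded length entering such a collar and reaching $\cS$ must do so within a cone'' --- is false: the curve $t\mapsto z_0(1+t^2+it)$ has finite length and approaches $z_0$ tangentially. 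The uniform cone aperture requires a genuinely dynamical argument: compare the length $l(x)$ of the tail of the $K$-related ray through $x$ with the distance $h(x)$ from $x$ to $\cS$, establish $h(x)/l(x)\ge\beta_0$ on a fundamental annulus between $\tilde L_N$ and $g^{-1}(\tilde L_N)$, and propagate this lower bound into a full collar of $\cS$ by iterating forward until the orbit enters the fundamental annulus, controlling the ratio along the way via Koebe distortion for the inverse branches of $g$. Nothing in your outline supplies this step.

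Two smaller gaps in $2^o$. First, non-emptiness is not a soft accumulation argument: to ``take $K$-related arcs whose landing points approach $z_0$'' you must already know that landing points occur in every arc, which requires knowing that at least one $K$-related ray exists (e.g.\ $P$-rays landing at repelling periodic points of $f$) and then using $g^n(I)=\cS$ for large $n$. Second, closedness requires showing that a limit of $K$-related arcs is again $K$-related, i.e.\ that the limiting $P$-ray still accumulates on $K_f$; this is done by observing that the limit crosses every $L_k$. The length bound and the Stolz-angle control by themselves do not give either point.
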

\begin{proof} 1$^o$.
Let $B_{*,k}=\sup\{u_P(z)|z\in L_k\}$.
For every
$k\ge 0$ there is a number $C_k$, such that, for every
ray $R\in\mathcal{R}$,
the length of the arc $R_k$ of $R$ between the points $z_k(R)$ and $z_{k+1}(R)$
is bounded by $C_k$. This is because the latter arc is an arc of a $P$-ray that joins two equipotentials of positive levels
$B_{*,k}$, $b_{*,k}$. Denote $\tilde L_k=\psi(L_k)$.
$\tilde L_k$ is a compact subset of $A_K$, which surrounds $\cS$.
By the above, every $K$-related arc $\tilde R$
splits into arcs $\tilde R_k=\psi(R_k)$, $k\ge 0$, i.e., $\tilde R_k$ is the arc
of $\tilde R$ joining
$\tilde z_k(\tilde R)$ and $\tilde z_{k+1}(\tilde R)$.
For every $k$,
the supremum of lengths over all arcs $\tilde R_k$
of the $K$-related rays $\tilde R$ is bounded
by the number
$\tilde C_k=C_k \sup\{|\psi'(z)|: z\in \overline W^*_1\setminus f^{-k-2}(W^*_1)\}$.


Let $A_{1,K}=\psi(W_1\setminus K_f)$
and
$
g = \psi\circ f\circ \psi^{-1} : A_{1,K}\to A_K
$
be a conjugated map. Then $z$ tends to $\cS$ if and only if $g(z)$ tends to
$\cS$.
It is well-known e.g. \cite{P}, that then the map $g$ extends to an expanding holomorphic
map in an annulus
$
U_0=\{z: 1-\rho_0<|z|<1+\rho_0\},
$
for some $\rho_0>0$.
That means that after passing if necessary to an iterate of $g$ (which we also
denote $g$) we have
\begin{equation}\label{1}
|(g^{-1})^\prime(z)|< c<1
\end{equation}
for every $z\in U_0$ and for every branch $g^{-1}$ such that $g^{-1}(z)\in
U_0$.

Fix a set $\tilde L_m\subset U=A_K\cap U_0$,
for some $m$ large enough.
Then, for each $n=1,2,...$, $\tilde L_{n+m} = \{z\in U: g^n(z)\in
\tilde L_{n+m} \}$. Denote by $l_n$ the supremum
of lengths of $\tilde R_{n+m}$ over  all $R\in\mathcal{R}$. Note that each $l_n$ is finite,
because $l_n\le \tilde C_{m+n}$.
In fact, much more is true: as $g^n(\tilde R_{n+m})$ is $\tilde S_m$ for some ray $S\in\mathcal{R}$, ~(\ref{1}) gives us that $l_n< c^n\cdot
l_0$.
Given a $K$-related ray $\tilde{R}$, the length of its arc from the point
$\tilde z_m(\tilde R)$ to $\cS$, which is
in the component of ${\bf C}\setminus
\tilde\gamma_0$ containing $\cS$, is bounded from above by
$
\sum_{n=0}^\infty c^nl_0 <\infty$. Moreover, the same argument shows the following

{\bf Claim 1}. {\it Lengths of the arcs of K-related rays $\tilde R$ between $\tilde z_k(\tilde R)$ and $\cS$ tend uniformly to zero (exponentially in $k$)}.

2$^o$
Fix a closed non-degenerate arc $I\subset \cS$. There exists a $K$-related
ray
converging to a point of $I$. Otherwise no $K$-related ray ends in the
arc $g^n(I)$, for every $n$. This is impossible because $g^n(I)=\cS$ for
big $n$ and the set of $K$-related rays is non-empty
(for example, it contains images by $\psi$
of $P$-rays landing at repelling periodic points
of the polynomial-like map $f: W^*_1\to W^*$; for the existence of such $P$-rays, see \cite{Mil0}, \cite{EL89}, \cite{LP}).
We need to show that the set $K(I)$ of arguments
of all $K$-related rays ending in $I$ is closed.

This is an immediate consequence of the next claim that follows, basically, from Claim 1 and will be also useful later on.
Given a K-related ray $\tilde R_t$ of argument $t$ (i.e., $t\in\Lambda$)  consider its arc $\hat r_t$ which lies between
$\tilde L_0$ and $\cS$ and parameterized as a curve $\tilde r_t:[b_{*,0},0]\to A_K\cup\cS$ as follows.
For any $b\in [b_{*,0},0)$, define the point $r_t(x)\in A_K$
as such that $\psi^{-1}(r_t(x))$ is a point of $P$-ray of argument $t$ and the equipotential level $b$.
Finally, let $\tilde r_t(0)=\lim_{b\to 0}\tilde r_t(x)\in\cS$ where the limit exists by the proven part 1$^0$.

{\bf Claim 2}. {\it The family $\tilde{\mathcal{R}}=\{\tilde{r}_t\}_{t\in\Lambda}$ is a compact subset of $C[b_{*,0},0]$}.

Let us first show that the family of functions $\{\hat{r}_t\}$ is equicontinuous. In view of Claim 1, this would follow from
the equicontinuity of the restricted family $\tilde{\mathcal{R}}_m=\{\hat{r}_t: [b_{*,0},b_{*,0}/D^{m}]\to A_K\}$, for each $m>1$ integer.
Fix $m$ and consider two objects: a compact set $E_m\subset \C$ bounded by the equipotential of levels $b_{*,0}$ and $b_{*,0}/D^m$ of $P$ and a family $\mathcal{R}_m$ of (closed) arcs in $E_m$ of all P-rays that join equipotential levels $b_{*,0}$, $b_{*,0}/D^m$ and
are parameterized by the equipotential level $b\in [b_{*,0}, b_{*,0}/D^m]$. It is very easy to see that this is a compact subset
of $C[b_{*,0}, b_{*,0}/D^m]$ (indeed, map this family by a fixed high iterate of $P$ to a family of smooth arcs of $P$-rays which are
preimages of segments of standard rays by the B\"{o}ttcher coordinate $B_P$ at infinity; hence, this new family is compact; then pull it back). As $\mathcal{R}_m\subset C[b_{*,0}, b_{*,0}/D^m]$ is compact, it is equicontinuous.
In turn, since $\psi^{-1}$ is a homeomorphism on $E_m$ (onto its image) and each $\psi^{-1}(\tilde {r}_t)\in\mathcal{R}_m$, the family $\tilde{\mathcal{R}}_m$ is equicontinuous too. Thus $\tilde{\mathcal{R}}$ is an equicontinuous family. It remains to prove that it is closed. So let a sequence $\hat{r}_{t_n}$ converge uniformly  in $[b_{*,0},0]$.
In particular, $\hat{r}_{t_n}$ crosses $\tilde L_k$ for each $k$ big enough.
One can assume that $t_n$ tends to some $t$. Then the sequence of arcs of P-rays $\psi^{-1}\circ\tilde{r}_{t_n}$,
on the one hand, tends, uniformly on each interval $[b_{*,0}, b_{*,0}/D^{m}]$, to an arc $r$ of P-ray of argument $t$, on the other hand, crosses each $L_k$ with $k$ big. Hence, $r$ has a limit point in $K_f$. Applying $\psi$ we get that
the limit of $\tilde{r}_{t_n}$ is a K-related arc which ends the proof of the claim.


This proves 2$^0$ when $I$ is not a single point. By the intersection of compacta, 2$^o$ also holds
if $I$ is a point.

 3$^o$ Every branch of $g^{-n}$ is a well defined univalent function in every disc contained in $U_0$. Hence, by Koebe distortion theorem e.g. \cite{Gol}, one can choose $0<\rho' <\rho_0$ such that for every
$$
z\in U'=\{z:1-\rho'<|z|<1+\rho'\},
$$
every $n=1,2,\cdots$ and every branch $g^{-n}$
\begin{equation}\label{2.2}
|\frac{(g^{-n})^\prime(x)}{(g^{-n})^\prime(y)}|<2
\end{equation}
whenever $|z-x|<\rho'$ and $|z-y|<\rho'$.

We reduce $U'$ further as follows. By Claim 1, fix $m_0>m$ such that the length of the arc of any K-related ray $\tilde R$ between $\tilde z_{m_0}(\tilde R)$ and $\cS$ is less than $\rho'$.
On the other hand, if $z$ lies in an unbounded component of $R\setminus z_{m_0}(R)$, i.e., in the arc of $R$ between $z_{m_0}(R)$ and $\infty$ then
$u_P(z)\ge b_{*, m_0}$, in particular, there is $r>0$ independent on $z$ and $R$ as above such that the distance between $z$ and $J_P$ is at least $r$.
Therefore, there exists some $\rho_1\in (0, \rho')$ such that for every $z\in \{z:1<|z|<1+\rho_1\}$, if $z$ belongs to a K-related ray $\tilde R$ then $z$ lies in an arc of $\tilde R$
between $\tilde z_{m_0}(\tilde R)$ and $\cS$.
Let
$$
U_1=\{z:1-\rho_1<|z|<1+\rho_1\}.
$$

Introduce the following notations:

Given $x\in U_1$, denote by $l_x$ the part
of the $K$-related ray passing through $x$ between $x$ and $\cS$
(if such a ray exists). This notation is correct: as noted already before, if another $K$-related ray
passes through $x$ and next ramifies from $l_x$, it goes to a component
of $\psi(J(f))$, not to $\cS$. So it is not $K$-related.

Denote by $h_x$ the interval which joins $x$
and $\cS$,  orthogonal to $\cS$. By $l(x)$ and $h(x)$ denote the
corresponding
Euclidean lengths. Find a large enough $N$ so that  $\tilde\gamma_0:=\tilde L_N$ in $U_1$. By the choice of $U_1$,
\begin{equation}\label{2.3}
l(x)<\rho' \ \hbox{ for all} \ x \ \hbox{ between} \ \tilde\gamma_0 \ \hbox{
and} \ \cS.
\end{equation}
Let $\tilde\gamma_1=g^{-1}(\tilde\gamma_0)$.
There exists a positive $\beta_0$ less than $1$ such that
\begin{equation}\label{2.4}
\frac{h(x)}{l(x)}>\beta_0
\end{equation}
for all points $x$ in the annulus $V$ between $\tilde\gamma_0$ and $\tilde\gamma_1$.

Fix the maximal $\epsilon _0 >0$ such that
$$U_2=\{z: 1-\epsilon _0 <|z|<1+\epsilon _0\}$$
does not
intersect $\tilde\gamma_1$. We intend to prove the assertion 3 of our Lemma
with
$$
\alpha = \arccos(\frac{\beta_0}{8L})
$$
where $L=\sup\{|g'(z)| : z\in U_0\}$
and with $\epsilon$ between $0$ and $\epsilon _0$ so small that
$1<|z|<1+\epsilon$ and $h(z)/|z-z_0|\ge 2\cos\alpha$ implies
$|\arg(z-z_0)-\arg z_0|\le \alpha$.

It is enough to prove that
\begin{equation}\label{enough}
\frac{h(x)}{l(x)}>\beta=\frac{\beta_0}{4L}
\end{equation}
for all $x\in U_2$. Assume the contrary: there exists $x_*\in U_2$,which belongs to some $K$-related ray $\tilde R$  with
\begin{equation}\label{2.5}
h(x_*)/l(x_*)\le \beta .
\end{equation}
Choose the minimal $n\ge 1$ such that $g^{n}(x_*)\in V$.

The lengths $h^{(i)}$ and $l^{(i)}$ of the curves $g^{i}(h_{x_*})$ and
$g^{i}(l_{x_*})$ can not exceed $\rho'$ for all $i=0,1,...,n$.
This holds for $l^{(i)}$ by (\ref{2.3}), because $g^i(x_*)$ is
between $\tilde\gamma_0$ and $\cS$. We cope with $h^{(i)}$'s by induction:
Length$(h^{(0)})<\rho$ by the definition of $U_1$. If it holds for all $i\le
j-1$ then by (\ref{2.2})

$$
\frac{h^{(j-1)}}{l^{(j-1)}}\le 4\beta=\beta_0/L.$$
Then
$$ h^{(j)}\le L h^{(j-1)}\le \beta_0\cdot l^{(j-1)}<l^{(j-1)}<\rho'
$$
Now we use the assumption (\ref{2.5}) and again apply (\ref{2.2}), and we obtain for
$z_*=g^{n}(x_*)\in \tilde S_N$,
$$
\frac{h(z_*)}{l(z_*)}\le \frac{h^{(n)}}{l^{(n)}}\le 4\beta =
\beta_0/L<\beta_0.
$$
This contradicts (\ref{2.4}).
\end{proof}
\begin{com}
The key bound (\ref{enough}) can also be seen directly from (\ref{2.4}) (with, for instance, $\beta=\beta_0/10$) by applying, additionally to the Koebe distortion bound (\ref{2.2}),
another distortion bound as follows: there is a function $\epsilon:(0,1)\to (0,+\infty)$, $\epsilon(r)\to 0$ as $r\to 0$, such that for any univalent function $\varphi$ on the unit disc, if $\varphi(0)=0$, $\varphi'(0)=1$,  then $|\log\frac{\varphi(z)}{z}|<\epsilon(|z|)$, see e.g. \cite{Gol}. This bound is applied to a function
$\varphi(z)=\frac{g^{-n}(w+\rho_0 z)-g^{-n}(w)}{(g^{-n})'(w)\rho_0}$ where $n$ is the minimal so that $g^n(x)\in V$, $w\in\cS$ is the projection of $g^n(x)$ to $\cS$
and reducing $\rho'$. Note that
$(g^{-n})'(w)>0$ because $g$ preserves $\cS$.
\end{com}

\

We continue as follows (cf. the proof of Theorem 6.9 of~\cite{abcdef}).
Recall that the straightening $h:\C\to\C$ is a quasiconformal homeomorphism which is holomorphic at $\infty$ and $h'(\infty)\neq 0$. It conjugates the polynomial-like map $f$ with the polynomial $G$ near their filled Julia sets
$K_f$ and $K_G$ respectively. Let $B_G: A_G\to \D^*$ be the B\"{o}ttcher coordunate of $G$
such that $B_G(z)/z\to 1$ as $z\to\infty$, which is well defined in the basin of infinity $A_G=\C\setminus K_G$ of $G$ as $K_G$ is connected.

We have the following picture:
\begin{equation}\label{pict}
\D^*\xrightarrow{\psi^{-1}}\C\setminus K_f\xrightarrow{h}\C\setminus K_G\xrightarrow{B_G}\D^*.
\end{equation}
Consider a map $\Psi:=\psi\circ h^{-1}\circ B_G^{-1}: \D^*\to \D^*$ from
the uniformization plane of the polynomial $G$ to the $g$-plane of K-related rays. It is a
quasiconformal homeomorphism which is holomorphic at $\infty$.
For $u\in \cS$, let $L_u=\Psi(r_u\cap
\mathbb{D^*})$ where $r_u=\{tu: t>0\}$ is a standard ray in the uniformization
plane of $G$.\footnote{Note that the curve $L_u$ lies in the left-hand disc $\D^*$ of (\ref{pict}) while the point $u$ is at the boundary of the right-hand disc there.}



\begin{lem}\label{l2} The curve $L_u$ converges non-tangentially to a
unique point $z_0=z_0(u)$ of the unit circle $\cS$. Moreover, there is $\beta \in (0,\pi/2)$
such that, for any $u\in \cS$ and all $z\in L_u$ close enough to $\cS$,
\begin{equation}\label{eqt2}
|\arg (z-z_0)-\arg z_0|\le \beta.
\end{equation}
Here $\beta$ depends only on the quasiconformal deformation of the straightening map $h$.
Furthermore, for every $z_0\in \cS$
there exists a unique $u$ such that $L_u$ lands at $z_0$.
\end{lem}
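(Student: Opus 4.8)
The plan is to transport the geometric information of Lemma~\ref{l1} through the quasiconformal homeomorphism $\Psi=\psi\circ h^{-1}\circ B_G^{-1}:\D^*\to\D^*$. First I would observe that $\Psi$ is a quasiconformal self-map of $\D^*$ that is conformal at $\infty$, hence by reflection it extends to a quasiconformal homeomorphism of a neighborhood of $\cS$ in $\C$, mapping $\cS$ onto $\cS$; in particular $\Psi$ extends to a (quasisymmetric) homeomorphism $\cS\to\cS$, and it maps radial segments $r_u\cap\D^*$ to curves $L_u$ landing at well-defined points of $\cS$ after we know they land at all. The point is that $\Psi$ is conformal a.e. on the inner boundary in a weak sense only near $\cS$ via $K_f$, but we do not need conformality: quasiconformality alone preserves the property ``lands non-tangentially at a unique boundary point'' up to changing the Stolz angle, provided the image curve has finite length — and that is exactly what Lemma~\ref{l1} part $1^o$ and Claim~1 give us. Concretely, each $L_u$ is, by construction, the $\psi$-image of a $P$-ray $R\in\mathcal R$ followed only in its part inside $\C\setminus K_f$; equivalently $L_u$ is a $K$-related arc continued inward, so $L_u=\tilde r_t$ for the corresponding argument $t$, and Lemma~\ref{l1}($1^o$) says it has finite length, hence converges to a unique $z_0(u)\in\cS$.

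Next I would establish the uniform non-tangential bound \eqref{eqt2}. Here I would invoke Lemma~\ref{l1} part $3^o$ directly: it says precisely that all $K$-related arcs in $\{1<|z|<1+\epsilon\}$ converging to a given point $z_0$ lie in a fixed Stolz angle of half-opening $\alpha\in(0,\pi/2)$, with $\alpha,\epsilon$ independent of $z_0$. Since every $L_u$, near $\cS$, coincides with such a $K$-related arc, \eqref{eqt2} holds with $\beta=\alpha$. The only subtlety is the claim that $\beta$ ``depends only on the quasiconformal deformation of $h$'': this requires tracing how $\alpha$ was produced in the proof of Lemma~\ref{l1}($3^o$) — it came from the Koebe distortion bound \eqref{2.2} for branches of $g^{-n}$ and the expansion constants $c,L$ of the extended map $g=\psi\circ f\circ\psi^{-1}$. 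One should note that $\psi$ is intrinsic to $K_f$ (it is the Riemann map of $\C\setminus K_f$), so these constants, and hence $\alpha$, depend only on the dynamics of $f$ near $J_f$, i.e. ultimately only on the hybrid class and on $h$ through $K_f$; I would phrase this carefully rather than claim more than the proof delivers.

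For the final assertion — that for every $z_0\in\cS$ there is a \emph{unique} $u\in\cS$ with $L_u$ landing at $z_0$ — I would argue as follows. Existence: for a radial slit $r_u$, as $u$ ranges over $\cS$ the landing points $z_0(u)$ exhaust $\cS$. This follows because $\Psi$ extends to a homeomorphism $\cS\to\cS$ (by the quasiconformal reflection argument above), and $\lim_{t\to1^+}\Psi(tu)=\Psi(u)\in\cS$ — that is, $z_0(u)=\Psi(u)$. So the map $u\mapsto z_0(u)$ is just the boundary extension of $\Psi$, which is a homeomorphism of $\cS$, hence bijective; this gives both existence and uniqueness in one stroke. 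The main obstacle, and the step deserving the most care, is justifying that $\Psi$ really does extend continuously (indeed homeomorphically) to $\cS$ with $z_0(u)=\Psi(u)$: a priori $\Psi$ is only a quasiconformal map $\D^*\to\D^*$ conformal near $\infty$, so one must argue that $\psi\circ h^{-1}\circ B_G^{-1}$ is quasiconformal up to $\cS$. This is where the hypothesis that $h$ is quasiconformal on all of $\C$ (not merely conformal on $K_f$) is used: $h^{-1}$ is globally quasiconformal, $B_G^{-1}$ is conformal on $\D^*$, and $\psi$ is conformal on $\C\setminus K_f$, so $\Psi$ is quasiconformal on $\D^*$ with the same dilatation as $h$, and Mori's theorem / boundary correspondence for quasiconformal maps of $\D^*$ then yields the homeomorphic boundary extension. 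Once this is in hand, uniqueness of the landing point for each $L_u$ follows from part $1^o$, and uniqueness of the preimage $u$ follows from injectivity of the boundary map; I would remark that an alternative, more hands-on route to injectivity is that two distinct radial slits $r_{u_1}, r_{u_2}$ are separated in $\D^*$ by a third radial slit, whose $\Psi$-image separates $L_{u_1}$ from $L_{u_2}$ near $\cS$ and therefore forces distinct landing points.
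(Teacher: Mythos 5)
Your treatment of the landing point and of the uniqueness of $u$ is essentially the paper's argument: extend $\Psi$ by quasiconformal reflection across $\cS$ to a homeomorphism $\Psi^*$ of $\C$, observe that $L_u$ lands at $z_0(u)=\Psi^*(u)$, and conclude existence and uniqueness of $u$ from the fact that $u\mapsto\Psi^*(u)$ restricts to a homeomorphism of $\cS$. That part is correct and matches the paper.

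The gap is in your derivation of the uniform Stolz-angle bound \eqref{eqt2}. You assert that ``each $L_u$ is, by construction, the $\psi$-image of a $P$-ray $R\in\mathcal{R}$\dots equivalently $L_u$ is a $K$-related arc'' and then apply Lemma \ref{l1}($3^o$). This identification is false: $L_u=\Psi(r_u\cap\D^*)=\psi\bigl(h^{-1}(B_G^{-1}(r_u\cap\D^*))\bigr)=\psi(l_\tau)$ is the $\psi$-image of a \emph{polynomial-like} ray, not of a $P$-external ray. The two families of curves are a priori unrelated; showing that each $L_u$ shares a landing prime end with some $K$-related ray is precisely the content of the correspondence $\lambda$ constructed \emph{after} Lemma \ref{l2}, using both lemmas, so invoking Lemma \ref{l1}($3^o$) here is circular. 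It is also inconsistent with the assertion that $\beta$ depends only on the quasiconformal deformation of $h$: the angle $\alpha$ of Lemma \ref{l1}($3^o$) depends on the expansion and distortion constants of $g$, i.e.\ on the dynamics, not on $h$. The correct route, which is the paper's, stays entirely on the quasiconformal side: $L_u^*=\Psi^*(r_u)$ is the quasiconformal image of a straight line, hence satisfies the bounded-turning estimate $|z-z_0|\le C(M)\,|z-1/\bar z|$ for $z\in L_u^*$ (Ahlfors), where $1/\bar z$ is the point of $L_u^*$ symmetric to $z$ under the reflection built into $\Psi^*$; since $|z-1/\bar z|$ is comparable to the distance from $z$ to $\cS$, this yields \eqref{eqt2} with $\beta$ depending only on the dilatation $M$ of $h$. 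You need this argument (or an equivalent quasi-line argument) in place of the appeal to Lemma \ref{l1}.
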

\begin{proof} of Lemma \ref{l2}. (cf. \cite{abcdef}, Section 6).
$\Psi: \D^*\to \D^*$ extend to a homeomorphism of the closures $\bar\D^*$ onto $\bar\D^*$ and then to a quasiconformal
homeomorphism $\Psi^*$ of $\C$ by $\overline{\Psi^*(z)}=1/\Psi^*(1/\bar z)$, see \cite{Ahl}. Note that quasiconformal deformations of $\Psi$ and $\Psi^*$ are the same,
i.e. are equal to the quasiconformal deformation $M$ of the straightening map $h$. Consider the curve
$L^*_u=\Psi^*(r_u)$. It is an extension of the curve $L_u$, which crosses $\cS$
at a point $z_0=\Psi^*(u)$. As a quasiconformal image of a straight line,
the curve $L^*_u$ has the following property \cite{Ahl}: there exists $C=C(M)>0$,
such that $|z-z_0|/|z-1/\overline{z}|<C$, for every $z\in L^*_u$. Therefore, $L^*_u$
tends to $z_0$ non-tangentially, moreover, (\ref{eqt2}) holds for some $\beta=\beta(C(M))$. The last claim follows from the fact that
$\Psi^*$ is a homeomorphism.
\end{proof}
Now, define the correspondence $\lambda$ as follows (having in mind (\ref{pict})).
Let $R$ be a $P$-ray to $K_f$. By Lemma~\ref{l1}, the
$K$-related ray $\tilde{R}=\psi(R)$ tends to a point $z_0\in \cS$.
By Lemma \ref{l2}, there exists a unique $L_u$ which tends to $z_0$. The curve $\psi^{-1}(L_u)=h^{-1}\circ B_G^{-1}(\{tu: t>1\})$ is a polynomial-like ray $l_\tau$ where $u=e^{2\pi i \tau}$.
Let
$$\lambda(R):=  \psi^{-1}(L_u).$$
The correspondence $\lambda$ is "onto" by the first claim of Lemma \ref{l2} along with Lemma \ref{l1} (2$^o)$.

Now, both curves $\tilde{R}$, $L_u$ in $\D^*$ tend to the point $z_0\in\cS$ non-tangentially, by Lemma \ref{l1} and Lemma \ref{l2} respectively. Then, by definition, the P-ray $R$ and the polynomial-like ray $\lambda(R)$ converge to a single prime end of $K_{\bf f}$
non-tangentially, hence, $R$ and $\lambda(R)$ are also $K_{\bf f}$-equivalent.
Finally, the condition that $R$ and $\lambda(R)$ are $K_{\bf f}$-equivalent
uniquely determines the polynomial-like ray $\lambda(R)$.

It remains to prove the ``almost injectivity'' of $\lambda$. This is a direct
consequence of the established above one-to-one correspondence between $K$-related rays and curves $L_u$ and the following claim whose proof is identical to the one of Theorem 6.8 of \cite{abcdef}
(for completeness, we reproduce it below with obvious changes in notations). While passing from K-related rays to $P$-rays we use the fact that if a K-related ray is periodic, the corresponding $P$-ray
converges to a periodic point of $P$ which is either repelling or parabolic (by Snail Lemma \cite{Mil0}, it cannot be irrationally indifferent).
\begin{lem}\label{Theorem 1'}
Any point $w\in \cS$ is the landing point of precisely one $K$-related ray,
except for when one and only one of the following holds:

\begin{enumerate}

\item[(i)] $w$ is the landing point of exactly two $K$-related rays,
which are non-smooth and have a common smooth arc that goes to the point $w$;

\item[(ii)] $w$ is a landing point of at least two disjoint $K$-related rays in which
    case $w$ is a (pre)periodic point of $g$ and some iterate $g^n(w)$
    belongs to a finite (and depending only on $K$) set $\hat Y$ of
    $g|_{\cS}$-periodic points each of which is the landing point of
    finitely many, but at least two, $K$-related rays, which are periodic of the same period depending merely on the landing point $w$.\footnote{In \cite{abcdef}, Theorem 6.8 (ii),
    it is claimed erroneously that all K-related rays to the point $w$ are smooth, cf. \cite{pz1}. Note that this claim is not relevant to the rest of \cite{abcdef}}
\end{enumerate}

Moreover, if $w$ is periodic then {\rm (i)} cannot hold.
\end{lem}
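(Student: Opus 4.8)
The statement is essentially a transfer of a classical theorem about landing patterns of external rays of polynomials to the setting of $K$-related rays. Recall that via $\psi$ the $K$-related rays live in $\D^*$, and the dynamics $g=\psi\circ f\circ\psi^{-1}$ extends (by Lemma \ref{l1}, or rather by the expansion property used there) to an expanding analytic circle map on a neighborhood $U_0$ of $\cS$, of degree $m=\deg f$ on $\cS$; moreover $g$ lifts to $\sigma_m$ on $\cT$ up to a conjugacy of the circle. The set $\Lambda$ of arguments of $K$-related rays is, by Lemma \ref{l1}(2$^o$) (together with the landing claim 1$^o$), a nonempty closed $\sigma_D$-invariant set whose image under the landing map $t\mapsto$ (landing point in $\cS$) is $\sigma_m$-equivariant. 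So the whole question reduces to: given the landing map $\ell:\Lambda\to\cS$ which is continuous (Claim 2 plus Lemma \ref{l1}), monotone, surjective, and semiconjugates $\sigma_D|_\Lambda$ to $g|_{\cS}$, describe its fibers. The plan is to run the standard argument for external rays of polynomials verbatim in this combinatorial/topological picture, which is exactly what the paper says (``proof is identical to the one of Theorem 6.8 of \cite{abcdef}'').

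First I would record that $\ell$ is monotone: two $K$-related rays with distinct arguments are disjoint curves in $\D^*$ (they are $\psi$-images of disjoint $P$-rays, or the two non-smooth branches at a critical potential), so they cannot cross, and hence the cyclic order of arguments is preserved by $\ell$. Consequently a fiber $\ell^{-1}(w)$ is a closed subset of $\Lambda$ which, together with the complementary arcs it cuts out of $\cS$, gives the ``rays landing at $w$''; if $k=\#\ell^{-1}(w)\ge 2$ the $k$ rays together with $w$ divide the plane into $k$ regions. Next, fix $w$ with $k\ge 2$ and analyze the induced dynamics on the (finite, by monotonicity and the degree bound) collection of fibers $\ell^{-1}(g^n(w))$: since $g$ is expanding of degree $m$ on $\cS$, the forward orbit of the cyclically ordered configuration of rays at $w$ is eventually periodic, so some $g^n(w)=:w'$ has $\ell^{-1}(w')$ mapped into itself by a power of $\sigma_D$; a standard pigeonhole/expansion argument then forces $w'$ to be periodic and all rays landing at it to be periodic of a common period, giving the finite set $\hat Y$ depending only on $K$ (it is the set of cycles carrying $\ge 2$ rays, and expansion bounds the total number of such rays, hence the size of $\hat Y$). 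The dichotomy (i) vs.\ (ii) is then: either the $k$ rays at $w$ are pairwise non-disjoint — by monotonicity this can only mean $k=2$ and the two rays share a common subarc accumulating at $w$, i.e.\ they come from the two branches of a single non-smooth ray, which corresponds under $\psi$ to a critical potential of $u_P$; or at least two of them are disjoint, and then the preceding periodicity argument applies, putting $w$ in the grand orbit of $\hat Y$. The final clause ``if $w$ is periodic then (i) cannot hold'' follows because a common subarc accumulating at a periodic $w$ would, under $g$, map to a common subarc at $g(w),\dots$ and eventually back, forcing the two branch rays to coincide — contradiction; alternatively, a periodic point on $\cS$ is not a critical value of the expanding map $g$ on $\cS$.

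Finally I would translate back from $K$-related rays to $P$-rays to justify the parenthetical remark before the lemma: a periodic $K$-related ray $\tilde R$ has $\psi^{-1}(\tilde R)=R$ a periodic $P$-ray (the period in $\cT$ under $\sigma_D$ being the period of the argument), and a periodic $P$-ray lands at a periodic point of $P$ that is repelling or parabolic, never irrationally indifferent, by the Snail Lemma \cite{Mil0}; this is what identifies the points of $\hat Y$, pulled back by $\psi^{-1}$, with the finite set $Y\subset J_{\bf f}$ of Theorem \ref{T1}(ii).

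The main obstacle I anticipate is purely bookkeeping rather than conceptual: one must be careful that the landing map $\ell$ is genuinely continuous and monotone in the present generality — in particular that the non-smooth rays of $P$ restricted to $\D^*$ behave like the two ``edges'' of a single ray and do not create spurious extra fibers — and that the ``finite and depending only on $K$'' assertion for $\hat Y$ really follows from expansion of $g$ alone (so it is intrinsic to $f$, independent of the ambient $P$). Both points are handled exactly as in \cite{abcdef}, Theorem 6.8, so beyond carefully transcribing that argument with $g$ in place of $\sigma_d$ and $\Lambda$ in place of the full circle, there is nothing new to do.
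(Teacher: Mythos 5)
Your proposal follows essentially the same route as the paper: reduce to the case of at least two \emph{disjoint} rays co-landing at $w$, attach to each such pair an arc, note that arcs attached to distinct landing points are pairwise disjoint, and use expansion to force the orbit of $w$ into a finite set of periodic points, quoting the repelling/parabolic dichotomy via the Snail Lemma at the end. However, three steps as you state them would need repair. First, the quantitative heart of the argument is \emph{not} the expansion of $g$ (degree $m$) on $\cS$: the domain cut out by two disjoint co-landing rays meets $\cS$ only at $w$, so it leaves no trace on $\cS$ to measure; the paper instead measures the arc of \emph{$P$-ray arguments} $\delta=L(\hat R_t,\hat R_{t'})$, which under one application of $g$ grows to at least $\min\{D\delta,1-D\delta\}$ with $D=d^r$ (not $m$), producing the finite set $\hat Z(K)$ of landing points whose arc has length at least $1/(2D)$. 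Hence your parenthetical claim that the finiteness of $\hat Y$ ``follows from expansion of $g$ alone, intrinsic to $f$ and independent of the ambient $P$'' is not how the proof goes, and it is not clear it can be made to work without the $P$-arguments. Second, your monotonicity premise that two $K$-related rays with distinct arguments are disjoint is false in general: a left ray of one argument and a right ray of another can share a common arc below a critical point of $u_P$ --- this is exactly alternative (i) --- so disjointness is available only \emph{after} excluding (i), which is how the paper orders the argument. Third, the finiteness and common period of the rays landing at a periodic point is not obtained by a pigeonhole argument here but quoted from Theorem 1 of \cite{LP}; and for the final clause the relevant obstruction is not that a periodic point of $\cS$ is a critical value of $g$ (the expanding circle map has no critical points on $\cS$), but that a \emph{periodic} non-smooth ray has infinitely many broken points accumulating at its landing point, so no second ray can share an arc with it all the way down (Lemma 6.1 of \cite{abcdef}); your first suggestion of pushing the common subarc around the cycle is in this spirit, but the ``alternative'' you offer does not address the actual issue.
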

\begin{proof}
Assume that there are two $K$-related rays landing at a point $w\in \cS$ and
that (i) does not hold. We need to prove that then (ii) holds. If (i) does not
hold, then there exist disjoint $K$-related rays landing at $w$. Let us study
this case in detail.

Associate to any such pair of rays $\hat R_t$, $\hat R_{t'}$ an open arc $(\hat R_t, \hat R_{t'})$ of $\cS$
as
follows. Two points of $\mathbb{S}^1$ with the arguments $t, t'$ split
$\cS$ into two arcs. Let the arc $(\hat R_t, \hat R_{t'})$ be the one
of them that contains no arguments of $K$-related rays except for possibly
those that land at $w$. Geometrically, it means the following. The $K$-related
rays $\hat R_t, \hat R_{t'}$ together with $w\in \cS$ split the plane into two
domains. The arc $(\hat R_t, \hat R_{t'})$ corresponds to the one of them,
disjoint from $\cS$. Let $L(\hat R_t, \hat R_{t'})=\delta$ be the angular length of
$(\hat R_t, \hat R_{t'})$. Clearly, 
$0<\delta<1$. Now we make a few observations.

(1) \emph{If $K$-related disjoint rays of arguments $t_1, t_1'$ land at a
common point $w_1$ while $K$-related disjoint rays of arguments $t_2, t_2'$
land at a point $w_2\not=w_1$, then the arcs $(\hat R_{t_1}, \hat R_{t_1'})$,
$(\hat R_{t_2}, \hat R_{t_2'})$ are disjoint.}

This follows from the definition of the arc
$(\hat R_{t}, \hat R_{t})$.

(2) \emph{If disjoint $K$-related rays $\hat R_t, \hat R_{t'}$ of arguments $t,
t'$ land at a common point $w$, then $K$-related rays $g(\hat R_t), g(\hat
R_{t'})$ are also disjoint and land at the common point $g(w)$. Moreover,}
$$L(g(\hat R_t), g(\hat R_{t'}))\ge \min\{D\delta(mod 1),
1-D\delta(mod 1)\}>0.$$

Indeed, the images $g(\hat R_t), g(\hat R_{t'})$ are disjoint near $g(w)$,
because $g$ is locally one-to-one. Hence, $g(\hat R_t)\cap g(\hat
R_{t'})=\emptyset$.
because otherwise
the corresponding $P$-rays would have their limit sets
in different components of $K_P$, a contradiction since
both rays $g(\hat R_t)$, $g(\hat R_{t'})$ are $K$-related.
Since the argument of $g(\hat R_t)$ is represented by the point $Dt(mod 1)\in (0,1)$, we get
the inequality of (2).

Let us consider the following set $\hat Z(K)$ of points in $\cS$: $w\in \hat
Z(K)$ if and only if there is a pair of disjoint $K$-related rays $\hat R, \hat
R'$, which both land at $w$, and such that $L(\hat R, \hat R')\ge 1/(2D)$.
Denote by $\hat Y(K)$ a set of periodic points which are in forward images of
the points of $\hat Z(K)$.

(3) \emph{If the set $\hat Z(K)$ is non-empty, then it is finite,
and consists of (pre)periodic points.}

Indeed, $\hat Z(K)$ is finite by (1). Assume $w\in \hat Z(K)$. Then, by (2)
some iterate $g^n(w)$ must hit $\hat Z(K)$ again.

To complete the proof, choose disjoint $K$-related rays $\hat R_t, \hat R_{t'}$
landing at $w\in \cS$ and use this to prove that all claims of (ii) hold.

{\it We show that the orbit $w, g(w),\dots$ cannot be infinite.} Indeed,
otherwise by (1)-(2), we have a sequence of non-degenerate pairwise disjoint
arcs $(g^n(\hat R_t), g^n(\hat R_{t'}))\subset \cS$, $n=0,1,...$. By (2), some
iterates of $w$ must hit the finite set $\hat Z(K)$ and hence $\hat Y(K)$
(which are therefore non-empty), a contradiction.

{\it Hence for some $0\le n<l$, $g^n(w)=g^l(w)$; let us verify that other
claims of {\rm(ii)} holds.} Replacing $w$ by $g^n(w)$, we may assume that $w$ is a
(repelling) periodic point of $g$ of period $k=l-n$. By (2), $w\in \hat Y(K)$.
By Theorem 1,~\cite{LP}, the set of $K$-related rays landing at $w$ is
finite, and each $K$-related ray landing at $w$ is periodic with the same
period.
Hence, (ii) holds. Finally, the last claim of the lemma follows because a periodic non-smooth ray must have infinitely many broken points, hence, no other ray can have a joint arc with it that goes up to the Julia set, see Lemma 6.1, \cite{abcdef} for details.
\end{proof}

\section{Proof of Theorems \ref{t1}-\ref{t2}}\label{s3}
\subsection{Theorem \ref{t1}.}
Part (a) is an immediate corollary of Lemma~\ref{l1}) and Lindel\"of's theorem, as in \cite{LP}.
Indeed, since a
curve $s\subset W\setminus K_{\bf f}$ converges to a point $a\in K_{\bf f}$,
the curve
$\tilde s=\psi(s)$ converges to a point $z_0\in\cS$, and the limit
of the function $\psi^{-1}$ along the curve $\tilde s$ exists and equals $a$.
By  Lemma \ref{l1}, there is a $K$-related ray $\tilde R$ that tends to $z_0$, moreover, non-tangentially. Then, by Corollary 2.17 of \cite{Pom}, the $P$-ray $R$ converges to the same point $a$.
By definition, curves $s$, $R$ are $K_{\bf f}$-equivalent.

Let us prove part (b). A closed set $S\cup K_f$ is connected and its complement is connected, too (by the Maximum Principle). Consider the set $\hat{S}=\psi(S)\subset \D^*$. Let $I=\overline{\hat{S}}\setminus\hat{S}$. Then $I$ is a connected closed subset of the unit circle $\cS$. Let us prove $I$ is a single point. Otherwise there is an interior point $x\in I$ which is $g$-periodic. Let $\beta$ be a $K$-related ray that lands at $x$.
Notice that since $x$ is an interior point of $I$, then $\beta$ must cross $\hat{S}$.
Now, since $x$ is $g$-periodic, $R=\psi^{-1}(\beta)$ is a periodic $P$-ray, hence, it converges to a periodic point $a\in\overline{S}\setminus S$ of $P$ and crosses $S$, a contradiction since $S\subset K_P$. It proves that $I$ is a single point. Denote it by $z_0$.
Choose two sequences $z'_n, z''_n$ of $\cS$ tending to $z_0$
from the left and from the right respectively, and two sequences of
$K$-related rays $l'_n, l''_n$, so that $l'_n$ lands $z'_n$ and $l''_n$
lands at $z''_n$. Then, passing perhaps to subsequences, by Claim 2, see the proof of Lemma \ref{l1}, the sequence $l'_n$ tends to a $K$-related ray
$l'$ and $l_n''$ tends to $K$-related ray
$l''$, where $l'$ and $l''$ land at the same $z_0$.
By the above, $l', l''$ are disjoint. Now we apply Lemma~\ref{Theorem 1'}
to conclude that the point $z_0$ is $g$-(pre-)periodic,
and some iterate of $z_0$ lies in a finite set $\hat Y\subset \cS$
of periodic points,
which is independent on
$z_0$. Hence, the
point $a$ is $P$-(pre-)periodic, and some iterate of $a$
lies in a finite set $Y\subset J_f$
of periodic points,
which is independent on
$a$. As every point of $Y$
is a landing point of a periodic ray,
it can be either repelling or parabolic.
\subsection{Theorem \ref{t2}}\label{s4}
Proof of (b), (c):
It follows from the definition of $\Lambda$
that $\sigma_D(\Lambda)=\Lambda$ and $\sigma_m\circ p=p\circ\sigma_{D}$ on $\Lambda$. By the invariance and since $\Lambda\neq \cT$, the set $\Lambda$ contains no intervals;
(c) is a reformulation of a part of the statement of Theorem \ref{T1}.

Proof of (a), (d): considering $\Lambda$ as a subset of ${\cS}=\{|z|=1\}$ define a new map $p_K: \Lambda\to \cS$ as follows: for $\tau\in\Lambda$, let $p_K(\tau)\in \cS$ be the landing point of a K-related ray of argument $\tau$. Let us recall the map $\Psi=\psi\circ h^{-1}\circ B_G^{-1}:\D^*\to\D^*$ which was introduced in the proof of Theorem \ref{T1}, and its quasi-conformal extension $\Psi^*:\C\to\C$. By Lemma \ref{l2} and the definition of maps $\lambda$ and $p$, we have:
\begin{equation}\label{*}
p_K=\Psi^*|_{\cS}\circ p.
\end{equation}
Since $\Psi^*: \cS\to \cS$ is an orientation preserving homeomorphism, it is enough to prove (a), (d) if one replaces the map $p$ by $p_K$. By 2$^o$, Lemma \ref{t1}, $p_K^{-1}(I)$ is closed in $\cS$ for any closed arc $I\subset \cS$. Therefore, $\Lambda=p_K^{-1}(\cS)$ is closed and
the map $p_K:\Lambda\to \cS$ is continuous. To show (d),
define an extension $\tilde p_K:\cS\to \cS$ of $p_K:\Lambda\to \cS$ in an obvious way as follows. Let $J:=(t_1,t_2)$ be a component of ${\cS}\setminus \Lambda$. Then $p_K(t_1)=p_K(t_2):=w_J$ because otherwise there would be a point of $\cS$ with no K-related rays landing at it.
Let $\tilde p_K(\tau)=w_J$ for all $\tau\in J$. Then $\tilde p_K:\cS\to \cS$ is continuous. Now, given $t\in\cS$, the set $\tilde p_K^{-1}(\{t\})$ is either a singleton or a non-trivial closed arc. This follows from the definition of $\tilde p_K$ and because K-related rays with different arguments don't intersect unless case (i) of Theorem \ref{Theorem 1'} takes place. Therefore,
$\tilde p_K:\cS\to\cS$ is monotone and degree one.

Proof of (e): let $\tilde h$ be another straightening, $\tilde{\Psi}:\D^*\to\D^*$ be the corresponding to $\tilde h$ quasiconformal map and $\tilde{\Psi}^*:\C\to\C$ its
quasiconformal extension.
As $p_K:\cS\to\cS$ is independent on the straightening, by (\ref{*}), $\tilde p=T|_{\cS}\circ p$ where $T=(\tilde{\Psi}^*)^{-1}\circ\Psi^*$. On the other hand,
being considered on $\D^*$, $T=(B_G\circ\tilde h)\circ(B_G\circ h)^{-1}$, hence, $T$ commutes with $z\mapsto z^m$ for $|z|>1$ near $\cS$, by definitions of $h,B_G$. Therefore, a homeomorphism $\nu:=T|_{\cS}:\cS\to\cS$ commutes
with $z\mapsto z^m$ on $\cS$, too. It is then well known that $\nu(z)=v z$, for some $v\in\C$ with modulus $1$ such that $v^m=v$ (proof: as $\nu(1)^m=\nu(1)$ let $v=\nu(1)$, so that a homeomorphism $\nu_0=v^{-1}\nu:\cS\to\cS$
commutes with $z\mapsto z^m$ too and $\nu_0(1)=1$; then there is a lift $\tilde\nu_0:\R\to\R$ of $\nu_0$ such that $\tilde\nu_0(0)=0$, $\tilde\nu_0-1$ is $1$-periodic and $\tilde\nu_0(mx)=m\tilde\nu_0(x)$ for all $x\in\R$ which, in turn, implies
$\tilde\nu_0(n/m^k)=n/m^k$ for all $n,k\in\Z$ positive; by continuity, $\tilde\nu_0(x)=x$ for all $x$).

\end{document}